\documentclass[12pt]{amsart}
\usepackage[]{latexsym,amssymb,amsmath,amsfonts, amsthm}
\usepackage[all,cmtip,ps]{xy}
\usepackage{enumitem}
\usepackage[dvipsnames]{xcolor}

\DeclareMathOperator{\R}{\mathbf{R}}

\def\R{\mathcal{R}}

\def\N{{\mathbb N}}

\def\Im{\operatorname{Im}}
\def\Hom{\operatorname{Hom}}

\def\dualita#1#2{\mathrel{
                 \mathop{\vcenter{
                 \offinterlineskip
                 \hbox to 0.6truecm{\rightarrowfill}
                 \hbox to 0.6truecm{\leftarrowfill}}}%
                 \limits_{#2}^{#1}}}

\DeclareMathOperator{\Ann}{Ann}

\DeclareMathOperator{\Ker}{Ker}



\newtheorem{theorem}{Theorem}

\newtheorem{corollary}[theorem]{Corollary}

\newtheorem{definition}[theorem]{Definition}

\newtheorem{lemma}[theorem]{Lemma}

\newtheorem{proposition}[theorem]{Proposition}
\theoremstyle{remark}
\newtheorem{remark}[theorem]{Remark}



\parindent=15pt
\parskip=3pt
\setlength{\textwidth}{6.5in}
\setlength{\oddsidemargin}{-3pt}
\setlength{\evensidemargin}{-3pt}
\addtolength{\voffset}{-1.5cm}
\setlength{\textheight}{9in}

\begin{document}

\title[Injective modules over  the Jacobson algebra]{Injective modules over \\ the Jacobson algebra $K\langle X, Y  \ | \ XY=1\rangle$}


\author{Gene Abrams}
\address{Department of Mathematics, University of Colorado, 1420 Austin Bluffs Parkway, 
Colorado Springs, CO 80918 U.S.A.}
\email{abrams@math.uccs.edu}

\author{Francesca Mantese}
\address{Dipartimento di Informatica, Universit\`{a} degli Studi di Verona,  strada le grazie 15, 37134  Verona, Italy}
\email{francesca.mantese@univr.it}

\author{Alberto Tonolo}
\address{Dipartimento di Matematica Tullio Levi-Civita, Universit\`{a} degli Studi di Padova, via Trieste 63,  35121, Padova, Italy}
\email{tonolo@math.unipd.it}

\begin{abstract}
For a field $K$, let $\mathcal{R}$ denote the Jacobson algebra  $K\langle X, Y  \ | \ XY=1\rangle$.   We give an explicit construction of the injective envelope of each of the (infinitely many) simple left $\mathcal{R}$-modules.  Consequently, we obtain an explicit  description of a minimal injective cogenerator for $\mathcal{R}$.   Our approach involves realizing $\mathcal{R}$ up to isomorphism as the Leavitt path $K$-algebra of an appropriate graph $\mathcal{T}$,  which thereby allows us to utilize important machinery developed for that class of algebras.   \\

 \ \ \ Keywords:  Jacobson algebra;  Injective module; Injective envelope; Leavitt path algebra; Chen simple module; Pr\"ufer module.  \\

\ \ \ 2010 Mathematics Subject Classification:  16S99

\end{abstract}

\maketitle

\section{introduction}

A unital ring $A$ is called {\it directly finite} in case, for any $x,y \in A$, if $xy=1$ then $yx=1$;  equivalently, in case every element of $A$ which is invertible on one side is invertible on the other side as well.  It is not hard to show that rings which satisfy various natural conditions (obviously commutativity, but also rings which satisfy some mild chain condition, etc.) are directly finite.  On the other hand, examples  abound of rings containing elements $x,y$ for which the condition fails.  Perhaps the most natural 'concrete' example is found in the endomorphism ring of a countably-infinite-dimensional vector space $V$ over a field;  the shift transformation $y$  which takes $e_i$ to $e_{i+1}$ (where $\{e_i \ | \ i\in \N\}$ is a basis for $V$) has this property. 

A moment's reflection yields that there is an even more natural example of a ring which fails to be directly finite, to wit:   
$$ \R = K\langle X,Y  \ | \ XY=1 \rangle,$$  the free associative $K$-algebra on two (noncommuting) generators, modulo the single relation $XY=1$.  
A search of the literature suggests that  this algebra was first explicitly studied by Jacobson in the late 1940's in \cite{Jacobson}.    
As such, for a field $K$ we will throughout the article refer to this algebra as the {\it Jacobson algebra} over $K$.\footnote{Because of its close relationship to the well-studied Toeplitz $C^*$-algebra, the Jacobson $K$-algebra $\R$ has also been called the ``algebraic Toeplitz $K$-algebra" elsewhere in the literature, see e.g. \cite{TheBook}. 
We prefer to call $\R$ the ``Jacobson algebra" to further emphasize our focus on $\R$ as an algebra in and of itself, rather than on its connection to graph $C^*$-algebras.} 
While the displayed description of $\R$ is quite straightforward,  the  structure of $\R$ is anything but.      

Various ring-theoretic and module-theoretic properties of $\R$ have been analyzed during the seven decades since Jacobson's work, including in:   Cohn \cite{Cohn} (1966);  Bergman \cite{Bergman} (1974);   Gerritzen \cite{Gerritzen} (2000);   Bavula \cite{Bavula} (2010);  Ara and Rangaswamy  \cite{AR} (2014);   Iovanov and Sistko \cite{IovanovSistko} (2017); { and Lu, Wang and Wang \cite{LuWangWang} (2019).}

It is interesting to note that in a majority of these articles (specifically \cite{Cohn}, \cite{Bergman}, \cite{Bavula}, and \cite{AR}), the structural properties of $\R$ follow from the fact that $\R$ appears as the ``smallest" or ``base case" of an infinite class of algebras.   Further, it is fair to say that there has been much focus in these works on the projective module structure of the algebras, but relatively little focus on the injective modules.

For the directed graph  $\mathcal{T} = \hspace{-.25in} \xymatrix{   & {\bullet} \ar@(ul,dl) \ar[r] &{\bullet}}$,  the Jacobson algebra $\R$ is isomorphic to the Leavitt path algebra $L_K(\mathcal{T})$ (see Proposition \ref{JacobsonisLeavitt} below).  The interpretation of the Jacobson algebra as a Leavitt path algebra will guide the investigation herein.  We refer those readers who are unfamiliar with Leavitt path algebras to \cite[Chapter 1]{TheBook}.

Following \cite{AR},
there are three classes of \emph{Chen} simple modules for Leavitt path algebras $L_K(E)$ of a general (finite) graph $E$:
\begin{itemize}
\item simple modules associated to sinks;
\item simple modules associated to infinite irrational paths;
\item simple modules associated to pairs consisting of infinite rational paths together with  irreducible polynomials in $K[x]$ with constant term equal to $-1$.\end{itemize}
Further, because the unique cycle in $\mathcal{T}$ is necessarily maximal, by \cite[Corollary 4.6]{AR} we conclude that every simple left $L_K(\mathcal{T})$-module  is a Chen simple;  so a complete list of non-isomorphic simple left $L_K(\mathcal{T})$-modules is given by
\begin{itemize}
\item the Chen simple module $L_K(\mathcal{T})w$ associated to the sink $w$, and
\item the Chen simple modules $V^f$ associated to the infinite rational path $c^\infty$ (where $c$ is the loop in $\mathcal{T}$), and to irreducible polynomials $f(x)$ in $K[x]$ with $f(0)=-1$.
\end{itemize}

The results presented in the current work, when placed  in the context of  previous collaborative work of the three authors (\cite{AMTExt}, \cite{AMTBezout}, \cite{AMTPrufer}),  follow the same ``smallest case" approach as that taken in the previously noted articles.     Among other things,  results regarding: the ${\rm Ext}^1$ groups of pairs of Chen simple modules; the B\'{e}zout property;  the construction of ``Pr\"{u}fer-like" modules for Chen simple modules;  and the construction of injective envelopes for some of these Chen simples,  have been achieved in these three papers for Leavitt path algebras $L_K(E)$ of general (finite) graphs $E$.   In the current work  we bundle  some of  the consequences of these results together with a new type of construction in the specific case where $E = \mathcal{T} $.  When viewed in the more general landscape, the graph $\mathcal{T} $ can be viewed as the smallest nontrivial example in which, on the one hand,   various of the known general Leavitt path algebra results may be applied, while, on the other hand,  in which a general construction of the injective envelopes of simple modules is heretofore not known.

 More to the point, we have constructed in \cite{AMTPrufer}  the injective envelopes  of the Chen simple modules which correspond to maximal cycles in a graph $E$, for the specific irreducible polynomial $f(x)=x-1$.  Our first task here is to show that a similar construction yields a description of the injective envelope of any Chen simple module which corresponds to the maximal cycle in $\mathcal{T}$,
 and  any irreducible polynomial $f(x) \in K[x]$ with $f(0)=-1$.   
 
 In an arbitrary graph $E$, any sink $u\in E$ gives rise to the Chen simple module $L_K(E)u$.     Although in many contexts this type of Chen simple yields the ``easy case" (because, for example, these are always projective), the task of finding the injective envelope of such Chen simples turns out require a significantly different  approach than that used for the Chen simples arising from maximal cycles. 
We are able to explicitly construct   the injective envelope of the Chen simple module  $L_K(\mathcal{T})w$  where $w$ is the unique sink in $\mathcal{T}$; see Corollary \ref{YisenvelopeofRw}.       The tools we employ  to build the injective envelope of $L_K(\mathcal{T})w$  seem to be new within the  Leavitt path algebra literature.   

Consequently, the construction of the injective envelopes corresponding to the two types of Chen simple modules allows us to present  an explicit description of  a minimal injective cogenerator for $L_K(\mathcal{T})$-Mod (Theorem  \ref{Yismininjcogen}).    
This is the the first time in the literature that an injective cogenerator for a non-Noetherian  Leavitt path algebra is completely described. In particular, the structure of all injective $L_K(\mathcal{T})$-modules, and hence of all representations of $L_K(\mathcal{T})$, is revealed.

Because it plays a central role in our investigations, we give the following explicit description of the Leavitt path algebra $L_K(\mathcal{T})$.  For the directed graph 
$$ \mathcal{T}:=  \hskip-.1in  \xymatrix{ {}^c \hskip-.3in & {\bullet}^v \ar@(ul,dl) \ar[r]^d &{\bullet}^w},$$
\noindent
  we consider the {\it extended graph} $\widehat{\mathcal{T}}$ of $\mathcal{T}$, pictured here:
$$\widehat{\mathcal{T}}  :=  \hskip-.1in  \xymatrix{  \hskip-.3in & {\bullet}^v \ar@(l,d)_c \ar@(u,l)_{c^*} 
 \ar@/^-1.0pc/[r]_d 
&   \ar@/^-1.0pc/[l]_{d^*}
  {\bullet}^w} \  .$$
Then $L_K(\mathcal{T})$ is defined to be the standard path algebra $K\widehat{\mathcal{T}}$ of $\widehat{\mathcal{T}}$ with coefficients in $K$, modulo these relations:
$$ c^*c = v; \ \ d^*d = w; \ \ c^*d = d^*c = 0;   \ \ \mbox{and} \ \ cc^* + dd^* = v.$$
\noindent
In particular, $v+w = 1_{L_K(\mathcal{T})}$.

\begin{proposition}\label{JacobsonisLeavitt}\cite[Proposition 1.3.7]{TheBook}
Let $K$ be any field, and let $\mathcal{T} $ be the graph 
$ \hspace{-.1in}
  \xymatrix{ {}^c \hskip-.3in & {\bullet}^v \ar@(ul,dl) \ar[r]^d &{\bullet}^w}.$   Then $\R\cong L_K(\mathcal{T})$ as $K$-algebras.  
\end{proposition}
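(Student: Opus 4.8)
The plan is to exhibit mutually inverse $K$-algebra homomorphisms between $\R$ and $L_K(\T)$; the only creative input is the choice of generator images, after which every verification is a finite manipulation of words governed by the single relation $XY=1$ in $\R$ and the listed relations in $L_K(\T)$.

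First I would define $\varphi\colon \R\to L_K(\T)$ through the universal property of the free algebra $K\langle X,Y\rangle$ by setting $\varphi(X)=c^*+d^*$ and $\varphi(Y)=c+d$. To see that this descends to the quotient $\R=K\langle X,Y\rangle/(XY-1)$ it suffices to check $\varphi(X)\varphi(Y)=1$, and indeed
\[
(c^*+d^*)(c+d)=c^*c+c^*d+d^*c+d^*d=v+0+0+w=1,
\]
using $c^*c=v$, $d^*d=w$, $c^*d=d^*c=0$ and $v+w=1$. As a consistency check reflecting that $\R$ is not directly finite, $\varphi(Y)\varphi(X)=(c+d)(c^*+d^*)=cc^*+dd^*=v\ne 1$, where the single relation $cc^*+dd^*=v$ is used.

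Next I would construct an inverse $\psi\colon L_K(\T)\to\R$ via the universal property of the Leavitt path algebra, for which it is enough to specify images of the generators satisfying all defining relations. Reverse-engineering $\varphi$ (note $\varphi(YX)=v$, whence $\varphi(1-YX)=w$, and then $\varphi(Y)v=c$, $\varphi(Y)w=d$, $v\varphi(X)=c^*$, $w\varphi(X)=d^*$) suggests
\[
v\mapsto YX,\quad w\mapsto 1-YX,\quad c\mapsto Y^2X,\quad d\mapsto Y-Y^2X,\quad c^*\mapsto YX^2,\quad d^*\mapsto X-YX^2.
\]
The substantive step, and the one I expect to be the main obstacle, is checking that these images satisfy the defining relations of $L_K(\T)$. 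The idempotency and orthogonality of the images of $v,w$ and the source--range identities for the edges reduce immediately once one observes $(YX)^2=Y(XY)X=YX$; writing $\psi(d)=Y(1-YX)$ and $\psi(d^*)=(1-YX)X$ streamlines the rest. The relations $c^*c=v$, $d^*d=w$, $c^*d=d^*c=0$ become $YX^2\cdot Y^2X=YX$, $(1-YX)X\cdot Y(1-YX)=1-YX$, and $YX^2\cdot Y(1-YX)=(1-YX)X\cdot Y^2X=0$, while $cc^*+dd^*=v$ becomes
\[
Y^2X\cdot YX^2+(Y-Y^2X)(X-YX^2)=YX.
\]
Each collapses after repeatedly cancelling an adjacent $XY$ to $1$; it is exactly here that one uses $XY=1$ while being careful never to invoke the false identity $YX=1$, so this bookkeeping is the part I would carry out most deliberately.

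Finally I would verify the two composites on generators. Immediately $\psi\varphi(X)=\psi(c^*+d^*)=YX^2+(X-YX^2)=X$ and $\psi\varphi(Y)=\psi(c+d)=Y^2X+(Y-Y^2X)=Y$, so $\psi\varphi=\operatorname{id}_\R$ since $X,Y$ generate $\R$; and $\varphi\psi$ fixes each generator of $L_K(\T)$, for instance $\varphi\psi(v)=(c+d)(c^*+d^*)=cc^*+dd^*=v$ and $\varphi\psi(c)=(c+d)^2(c^*+d^*)=c(cc^*+dd^*)=cv=c$, with the remaining generators handled the same way, so $\varphi\psi=\operatorname{id}_{L_K(\T)}$. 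Hence $\varphi$ is the desired isomorphism. Since the difficulty is purely computational, an alternative for those preferring to avoid $\psi$ would be to match the standard $K$-basis $\{Y^aX^b : a,b\ge 0\}$ of $\R$ against the path basis $\{\mu\nu^*\}$ of $L_K(\T)$; but constructing the explicit inverse is cleaner and self-contained.
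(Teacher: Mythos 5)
Your proposal is correct and uses exactly the map $\varphi(X)=c^*+d^*$, $\varphi(Y)=c+d$ that the paper uses; the paper merely verifies $\varphi(X)\varphi(Y)=1$ and delegates the rest to \cite[Proposition 1.3.7]{TheBook}, whereas you supply the omitted details by constructing the explicit inverse $\psi$ and checking the Leavitt path algebra relations, all of which I have verified. So this is essentially the same approach, carried out in full rather than by citation.
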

\begin{proof}
In $L_K(\mathcal{T})$ we have 
$$(c^* + d^*)(c + d) = v + 0 + 0 + w = 1_{L_K(\mathcal{T})}, \ \ \mbox{and} $$
$$  (c+d)(c^* + d^*) = cc^* + 0 + 0 + dd^* = v \neq 1_{L_K(\mathcal{T})}.$$ With this as context, one can show that the map 
$$\varphi:  \R \to L_K(\mathcal{T}) \ \ \mbox{ given by the extension of} \ \  \varphi(X) = c^* + d^*,  \ \varphi(Y)= c+d$$
is an isomorphism of $K$-algebras.     
\end{proof}

{In particular, we note for later use that the element $c$ of $L_K(\mathcal{T})$ corresponds to the element $y^2x$ of $\mathcal{R}$ under this isomorphism.} 

With Proposition \ref{JacobsonisLeavitt} in mind, for the remainder of this article we investigate the structure of  the Jacobson algebra $\R := K\langle X, Y \ | \ XY = 1\rangle$ by equivalently investigating the structure of the Leavitt path algebra $ L_K(\mathcal{T})$.

\begin{corollary}\label{quotientsofinjectives}
The Jacobson algebra is (left and right) hereditary.  Specifically, quotients of injective left $\R$-modules are injective.  
\end{corollary}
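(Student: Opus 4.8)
The plan is to establish that $\R$ is hereditary, since the statement about quotients of injectives follows formally: a ring is left hereditary if and only if every quotient of an injective left module is injective (this is a standard homological characterization, equivalent to global dimension $\leq 1$, equivalent to every submodule of a projective being projective, equivalent to $\Ext^2$ vanishing). So the mathematical content is the heredity of $\R$, and the ``specifically'' clause is just the restatement via this equivalence.

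**First I would** invoke the isomorphism $\R \cong L_K(\mathcal{T})$ from Proposition \ref{JacobsonisLeavitt} and appeal to the general theory of Leavitt path algebras, where it is a foundational result (see \cite[Theorem 3.2.5]{TheBook} or the analogous statement for hereditary rings) that the Leavitt path algebra $L_K(E)$ of any finite (indeed countable row-finite) graph $E$ is hereditary, having global dimension at most $1$. Since $\mathcal{T}$ is a finite graph, $L_K(\mathcal{T})$ is hereditary, and heredity is a Morita-invariant, isomorphism-stable property, so $\R$ is hereditary as well. The left-right symmetry is immediate because $\R$ admits an obvious anti-automorphism (or because $L_K(E)$ is hereditary on both sides), so the right-handed statement holds by the same token.

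**Alternatively, if a self-contained argument is preferred,** I would verify heredity directly from the presentation $\R = K\langle X,Y \mid XY=1\rangle$ by exhibiting a projective resolution of length one for an arbitrary left module, or equivalently by showing every left ideal is projective. A clean route is to produce, for each simple left module, a projective resolution of length $\leq 1$; combined with the fact that the three classes of Chen simples enumerated in the introduction exhaust all simple modules, this bounds the global dimension by $1$. The key computational input is understanding the kernel of a projective cover, which for the Jacobson algebra is tractable because the relation $XY=1$ makes the structure of one-sided ideals explicit.

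**The main obstacle** is essentially one of bookkeeping rather than depth: the cleanest proof simply cites the known heredity of Leavitt path algebras of finite graphs, so the only real decision is whether to present a self-contained module-theoretic verification or to lean on the established Leavitt path algebra machinery that the paper has already committed to using. Given the stated strategy of the paper—to exploit the Leavitt path algebra realization—I expect the intended proof to be the short one invoking \cite{TheBook}, with the homological equivalence ``hereditary $\iff$ quotients of injectives are injective'' supplying the second sentence of the corollary.
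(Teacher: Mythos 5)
Your proposal is correct and matches the paper's proof exactly: the paper cites \cite[Theorem 3.2.5]{TheBook} for the heredity of $L_K(E)$ over any finite graph $E$, and \cite[Theorem 3.22]{Lam} for the standard characterization of hereditary rings via quotients of injectives being injective. The alternative self-contained argument you sketch is not needed, as you correctly anticipated.
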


\begin{proof}
By \cite[Theorem 3.2.5]{TheBook}, the Leavitt path algebra $L_K(E)$ for any finite graph $E$ is hereditary.   That hereditary rings have (indeed, are characterized by) the specific  property follows by \cite[Theorem 3.22]{Lam}.  
\end{proof}

 An application of \cite[Corollary 1.5.12]{TheBook} gives the following useful description of a $K$-basis of $L_K(\mathcal{T})$. 
\begin{lemma}\label{basisofToeplitz}
The following set forms  a $K$-basis of  $L_K(\mathcal{T})$:  
$$v, \ w, \ d,  \ d^*, \ c^i, \ c^id, \ c^i (c^*)^j,  \ (c^*)^j, \ d^*(c^*)^j$$
where $i, j \geq 1$.  
\end{lemma}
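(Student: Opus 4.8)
The plan is to specialize the general basis theorem for Leavitt path algebras, \cite[Corollary 1.5.12]{TheBook}, to the concrete graph $\mathcal{T}$. That result furnishes a $K$-basis of $L_K(E)$ consisting of monomials $pq^*$, where $p,q$ range over the real paths of $E$ with $r(p)=r(q)$, once the redundancies forced by the Cuntz--Krieger relation at each regular vertex have been accounted for. So the first step is purely to enumerate the paths in $\mathcal{T}$. Since the only edges are the loop $c$ at $v$ and the arrow $d\colon v\to w$, and since $w$ is a sink, every path either loops at $v$ or loops some number of times and then exits along $d$; no path can contain $d$ in an interior position. Hence the complete list of real paths is $v$, $w$, $c^i$ $(i\geq 1)$, and $c^i d$ $(i\geq 0)$.

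Next I would form the monomials $pq^*$ and sort them by the common range $r(p)=r(q)\in\{v,w\}$. When $r(p)=r(q)=v$, both $p,q\in\{v\}\cup\{c^i\}$, yielding $v$, $c^i$, $(c^*)^j$, and $c^i(c^*)^j$. When $r(p)=r(q)=w$, both $p,q\in\{w\}\cup\{c^i d\}$, yielding $w$, $c^i d$, $d^*(c^*)^j$, and the extra family $(c^i d)(c^j d)^* = c^i\,dd^*\,(c^*)^j$. Splitting off the boundary cases $i=0$ and $j=0$ then isolates the individual basis elements $d=c^0d$, $d^*=d^*(c^*)^0$, $v$, and $w$, so that every element on the claimed list appears, together with the one surplus family $(c^i d)(c^j d)^*$.

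The substantive point, and the step I expect to be the main obstacle, is the disappearance of that surplus family. The vertex $v$ is the unique regular vertex of $\mathcal{T}$, and the relation $cc^*+dd^*=v$ recorded in the description of $L_K(\mathcal{T})$ gives $dd^*=v-cc^*$. Substituting yields $(c^i d)(c^j d)^* = c^i(c^*)^j - c^{i+1}(c^*)^{j+1}$, so each such monomial lies in the $K$-span of the elements $c^i(c^*)^j$ (and of $v$, $c^i$, $(c^*)^j$ when $i$ or $j$ is zero). This is precisely the redundancy that the basis of \cite[Corollary 1.5.12]{TheBook} removes, by treating $d$ as the distinguished edge at $v$ and excluding the monomials whose two paths both terminate in $d$. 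After this reduction the surviving monomials are exactly the listed elements, and their linear independence is inherited directly from the cited basis theorem; no separate independence argument is required. I would close by remarking that at $w$, being a sink, there is no such relation, so no further cancellations occur and the enumeration is complete.
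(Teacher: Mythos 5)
Your proof is correct and follows the same route as the paper, which disposes of the lemma in one line by citing \cite[Corollary 1.5.12]{TheBook}; you simply carry out the specialization explicitly (enumerating the paths of $\mathcal{T}$, choosing $d$ as the special edge at the unique regular vertex $v$, and using $dd^*=v-cc^*$ to eliminate the redundant family $(c^id)(c^jd)^*$). The extra detail is accurate and consistent with what the cited corollary asserts.
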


\bigskip

\textbf{NOTATION.} Throughout, we often denote $L_K(\mathcal{T})$ simply by $R$.  We denote by $\mathbb N$ the set of positive integers $\{1,2,3,...\}$, and by $\mathbb Z^+$ the set $\mathbb N\cup\{0\} = \{0,1,2,...\}$.   

\medskip

The word ``module" will always mean ``left module".    For $f(x) \in K[x]$ and $n\in \N$  we denote $(f(x))^n$ by $f^n(x)$.  

\medskip

 For any polynomial $g(x)=\sum_{i=0}^m k_{i}x^{i}\in K[x]$, and the cycle $c$ in $\mathcal T$, we denote by $g(c)$ the element 
\[g(c):=k_01_R+k_1c+\cdots+k_mc^m\in R. \]
Rewritten, $g(c) = k_0v + k_0w +k_1c+\cdots+k_mc^m\in R$.  This notation is well-suited for our purposes, {\it but we note that this definition of $g(c)$  is different than that used for expressions of the form $g(c)$ elsewhere in the literature}. 
In certain instances we will need to clearly distinguish between our notation and these other notations.  For $g(x) = \sum_{i=0}^m k_{i}x^{i}\in K[x]$ we denote by $g\big|_v(c)$  the element
$$g \big|_v (c) := k_0v +k_1c+\cdots+k_mc^m\in R.$$
So $g(c)=k_0w+g \big|_v (c)$ and $g \big|_v (c)=vg(c)$.

 \medskip
 
 We denote by $\mathcal{F}$ the set of polynomials 
 $$\mathcal{F} := \{f(x) \in K[x] \ | \ f \mbox{ is irreducible in } K[x], \mbox{and} \ f(0)=-1_K\},$$ 
and by $\mathcal{P}$ the set of polynomials 
 $$\mathcal{P} := \{p(x) \in K[x] \ | \ p(0)\not=0\}.$$
 We note that the family $\mathcal{F}$ is a set of pairwise  non-associate representatives of the irreducible elements in the ring of Laurent polynomials $K[x,x^{-1}]$.

\medskip

\section{The simple modules over $L_K(\mathcal{T})$}\label{simplessection}

In this section we present properties of the simple left $L_K(\mathcal{T})$-modules of the form $V^f$, where $f(x)\in \mathcal{F}$.   Many of these properties were established in \cite{AR}; we will develop here some additional information about these simple modules which will be needed in the sequel.     Although we will not actually utilize the following piece of information until the final section of the article, we reiterate 
here that because there is a unique cycle in $\mathcal{T}$,  \cite[Corollary 4.6]{AR} applies, and  yields that, up to isomorphism, all but one of the simple modules over $L_K(\mathcal{T})$  are of the indicated form.   (The only other simple $L_K(\mathcal{T})$-module is $L_K(\mathcal{T})w$.)

We now make a detailed presentation of the  construction of the modules $V^f$.
%
%
Assume $f(x)\in \mathcal{F}$ has degree $n$. Denote by $K'$ the field $K[x]/\langle f(x)\rangle$ and by $\overline x$ the element $x+\langle f(x)\rangle$. Clearly $\{1, \overline x, ..., \overline x^{n-1}\}$ is a $K$-basis of $K'$. Consider the  infinite path $c^{\infty}$; the class of infinite paths tail equivalent to $c^\infty$ consists only of $c^\infty$ itself. 
Let $V^{\overline x}$ be the one dimensional $K'$-vector space generated by $c^\infty$. Setting
\[d\star c^\infty=d^*\star c^\infty=w\star c^\infty = 0;\]
\[ \ v\star c^\infty=c^\infty;  \ c\star c^\infty=\overline x c^\infty; \ \mbox{and}   \ c^*\star c^\infty={\overline x}^{-1}c^\infty,\]
$V^{\overline x}$ becomes a left $L_{K'}(\mathcal{T})$-module.
Consider the linear maps
\[\sigma^{\overline x}:V^{\overline x}\to K', \quad h\cdot c^\infty\mapsto h,\text{ and}\]
\[\rho^{\overline x}:K'\to V^{\overline x}, \quad h\mapsto h\cdot c^\infty.\]
Clearly these maps are inverse isomorphisms of one-dimensional $K'$-vector spaces.

\medskip

\noindent Restricting the scalars to $K$, the abelian group $V^{\overline x}$ has also a  left $R$-module structure: we denote this left $R$-module by $V^{f}$. A $K$-basis of $V^{f}$ is $\{c^\infty, \overline x c^\infty, ..., {\overline x}^{ n-1}c^\infty\}$. 
Denote by $G^f$ the $K$-subspace of $R$ generated by $\{1, c, ..., c^{n-1}\}$. (We note for later that clearly any element in $G^f$ commutes with $f(c)$.). 
The linear maps
\[\sigma^f:V^f\to G^f , \quad {\overline x}^{ i}c^\infty\mapsto c^i,\text{ and}\]
\[\rho^f:G^f \to V^f, \quad c^i\mapsto {\overline x}^{ i}c^\infty\ \]
(for $0\leq i \leq n-1$) define inverse isomorphisms of $n$-dimensional $K$-vector spaces. The map $\rho^f$ is the restriction to $G^f$ of the right multiplication map by $c^\infty$:
\[\rho:R\to V^f,\ r\mapsto r\star c^\infty.\]
Clearly one has
\[\sigma^f({\overline x}^{ i}c^\infty)\star c^\infty=c^i\star c^\infty={\overline x}^{ i}c^\infty=\rho^f(c^i)=\rho(c^i).\]

\begin{lemma}\cite[Lemma 3.3]{AR} \label{Lemma:Vsemplice}. Let $f(x) \in \mathcal{F}$.  Then the left $R$-module $V^f$ is simple.
\end{lemma}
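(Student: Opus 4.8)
The plan is to show that $V^f$ is simple by proving that it is generated as an $R$-module by any one of its nonzero elements. Since $V^f$ is an $n$-dimensional $K$-vector space with basis $\{c^\infty, \overline{x}c^\infty, \dots, \overline{x}^{n-1}c^\infty\}$, and since it carries a natural left $K'$-module structure (where $K' = K[x]/\langle f(x)\rangle$ is a field and $V^{\overline{x}}$ is one-dimensional over $K'$), the key observation is that $V^{\overline{x}}$ is automatically simple as a $K'$-module. The whole task is therefore to check that $R$-submodules of $V^f$ and $K'$-submodules of $V^{\overline{x}}$ coincide; equivalently, that scalar multiplication by $K'$ can be recovered from the $R$-action after restriction of scalars.

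First I would take an arbitrary nonzero element $\xi \in V^f$ and argue that $R\xi = V^f$. Writing $\xi = h c^\infty$ for some nonzero $h \in K'$, I would use the $K'$-module structure: because $K'$ is a field, $h$ is invertible in $K'$, so $\overline{x}^i = (\overline{x}^i h^{-1}) h$ for each $i$. The crucial point is that the operator ``multiplication by $\overline{x}$'' on $V^{\overline{x}}$ is exactly the action of $c \in R$, since $c \star c^\infty = \overline{x}c^\infty$ and this action is $K$-linear. Hence every power $\overline{x}^i c^\infty = c^i \star c^\infty$ lies in $R\xi$ as soon as I can produce $h^{-1}c^\infty$ inside $R\xi$, and more generally I want to realize multiplication by an arbitrary element of $K'$ through the $R$-action. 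Concretely, since $h^{-1} \in K'$ can be written as a $K$-polynomial expression $g(\overline{x})$ in $\overline{x}$ of degree $< n$, the element $g(c) \star \xi = g(\overline{x})h\, c^\infty = c^\infty$ recovers the generator $c^\infty$; then applying $c^i$ for $0 \le i \le n-1$ produces the full basis, giving $R\xi = V^f$.

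The main obstacle, and the step that needs the most care, is justifying that the subring of $\End_K(V^{\overline{x}})$ generated by the $R$-action contains all of $K'$ acting by scalar multiplication — in other words, that restriction of scalars does not shrink the lattice of submodules. This reduces to showing that the map $K[x] \to \End_K(V^{\overline{x}})$ sending $x \mapsto (c \star -)$ factors through $K' = K[x]/\langle f(x)\rangle$ and realizes $K'$ faithfully as a subring of operators. Since $c \star c^\infty = \overline{x}c^\infty$ and $\overline{x}$ satisfies $f(\overline{x}) = 0$, the operator $f(c)\star -$ annihilates the $K'$-generator $c^\infty$; but here I must use the specific shape of $f$, namely that $f(0) = -1$ guarantees $\overline{x}$ is invertible in $K'$ (the constant term of $f$ is a unit), which is what makes $c^*$ act as $\overline{x}^{-1}$ and keeps $V^{\overline{x}}$ a well-defined $K'$-vector space rather than merely a $K[x]$-module. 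Once this identification is in hand, every nonzero $R$-submodule is a nonzero $K'$-subspace of the one-dimensional space $V^{\overline{x}}$, hence all of $V^{\overline{x}} = V^f$, establishing simplicity.

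A cleaner alternative I would keep in reserve is purely dimension-theoretic: exhibit the $K$-algebra map $K' \hookrightarrow \End_R(V^f)^{\mathrm{op}}$ and invoke that $V^{\overline{x}}$ is one-dimensional over the field $K'$, so it has no proper nonzero $K'$-subspaces; combined with the fact that $R$-submodules are in particular stable under the commuting $K'$-action, any nonzero $R$-submodule is forced to be everything. This framing avoids explicit polynomial manipulation and isolates the single essential fact that $K'$ is a field, making the irreducibility hypothesis $f \in \mathcal{F}$ the load-bearing assumption.
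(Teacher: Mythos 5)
Your proposal is correct and takes essentially the same route as the paper: the load-bearing observation in both is that the action of $c$ realizes multiplication by $\overline x$, so every nonzero $R$-submodule of $V^f$ is a nonzero $K'$-subspace of the one-dimensional $K'$-space $V^{\overline x}$ and hence is all of $V^f$. The ``cleaner alternative'' you keep in reserve is in fact precisely the argument the paper gives.
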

\begin{proof}
Let $U$ be a nonzero $R$-submodule of $V^f$. Since $\{1,\overline x, ..., {\overline x}^{ n-1}\}$ is a $K$ basis for $K'$ and
\[\overline x\cdot u=c\star u\ \ \ \  \forall u\in U,\]
$U$ is also a $K'$-space. Since $V^{\overline x}$ is a one-dimensional $K'$-space, we have $U=V^{\overline x}$ as $K'$-spaces and hence $U=V^f$ as left $R$-modules.
\end{proof}

\medskip

\noindent The next result shows that the $V^f$'s are finitely presented, and determines their annihilators.
\begin{lemma}\label{lemma:seb}
Let $f(x) \in \mathcal{F}$. Denoting by $\widehat{\rho}_{f(c)}:R\to R$ the right multiplication map by $f(c)$, we have the following short exact sequence of left $R$-modules:
\[\xymatrix{0\ar[r]&R\ar[r]^{\widehat{\rho}_{f(c)}}&R\ar[r]^\rho&V^f\ar[r]&0}.\]
In particular:
\begin{enumerate}
\item The kernel of $\rho:R\to V^f$ is $Rf(c)$.
\item $Rf(c)$ coincides with the two-sided ideal $\Ann_R(V^f)$.
\end{enumerate}
\end{lemma}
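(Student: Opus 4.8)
The plan is to verify the three conditions that assemble into the short exact sequence---surjectivity of $\rho$, the equality $\ker\rho=Rf(c)$ (which is conclusion (1) and provides exactness in the middle), and injectivity of $\widehat{\rho}_{f(c)}$---and then to derive (2) from the fact that $Rf(c)$ is two-sided. Surjectivity of $\rho$ is immediate, since the elements $\rho(c^i)={\overline{x}}^{\,i}c^\infty$ for $0\le i\le n-1$ already span the $n$-dimensional space $V^f$. For the inclusion $Rf(c)\subseteq\ker\rho$ I would compute $f(c)\star c^\infty=f(\overline{x})c^\infty=0$, using $f(\overline{x})=0$ in $K'$; since $\rho$ is a map of left $R$-modules, this forces $Rf(c)\star c^\infty=0$.

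The heart of the argument is the reverse inclusion $\ker\rho\subseteq Rf(c)$, which I would obtain by a dimension count. I claim that $R=G^f+Rf(c)$, whence $\dim_K R/Rf(c)\le\dim_K G^f=n$. Granting this, the surjection $\rho$ induces a surjection $R/Rf(c)\tra V^f$ from a space of dimension at most $n$ onto one of dimension exactly $n$, which must therefore be an isomorphism; hence $\ker\rho=Rf(c)$. To prove $R=G^f+Rf(c)$ I would reduce each basis element of $R$ from Lemma~\ref{basisofToeplitz} modulo $Rf(c)$ into $G^f$. The identities $wf(c)=-w$, $df(c)=-d$, and $d^*f(c)=-d^*$ (the last using the relation $d^*c=0$) show $w,d,d^*\in Rf(c)$; as $Rf(c)$ is a left ideal, this disposes at once of the basis elements $w,\,d,\,d^*,\,c^id$, and $d^*(c^*)^j$. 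The powers $c^i$ are reduced into $G^f$ by polynomial division against $vf(c)=f\big|_v(c)=-v+\sum_{i\ge 1}k_ic^i\in Rf(c)$, whose leading coefficient $k_n$ is invertible. The remaining, and most delicate, elements are those involving $c^*$: here I would use the single identity $c^*f(c)=-c^*+h(c)$, where $h(c):=k_1v+k_2c+\cdots+k_nc^{n-1}\in G^f$ (from $c^*c=v$), which gives $c^*\equiv h(c)\pmod{Rf(c)}$; iterating this and reducing powers of $c$ brings every $(c^*)^j$ and every $c^i(c^*)^j$ into $G^f$. This systematic reduction, and in particular the bookkeeping for the $c^*$-terms, is the main obstacle.

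For injectivity of $\widehat{\rho}_{f(c)}$ I would pass to the quotient of $R$ by the graded ideal $I(w)$ generated by the sink $w$: by standard Leavitt path algebra machinery $R/I(w)\cong L_K(\T\setminus\{w\})\cong K[t,t^{-1}]$, and under this map $f(c)\mapsto f(t)$, a nonzero element of a domain. Thus if $rf(c)=0$, then the image of $r$ is annihilated by $f(t)$ and hence vanishes, so $r\in I(w)$. On $I(w)$, which is a matrix algebra over $K$ with matrix units indexed by the paths ending at $w$, right multiplication by $f(c)$ acts, with respect to a suitable ordering of these units, by a triangular transformation all of whose diagonal entries equal $f(0)=-1$; such a transformation is injective. (It is exactly here that the hypothesis $f(0)=-1\neq 0$ enters.) Hence $\widehat{\rho}_{f(c)}$ is injective, and together with $\ker\rho=Rf(c)$ and surjectivity of $\rho$ this gives the short exact sequence and conclusion (1).

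Finally, for (2) I would first check that $Rf(c)$ is two-sided by verifying $f(c)g\in Rf(c)$ for $g$ ranging over the generators $v,w,c,c^*,d,d^*$ of $R$. All cases are immediate from $w,d,d^*\in Rf(c)$ and the fact that $f(c)$ commutes with $c$, except for $f(c)c^*$, which I would settle using $cc^*=v-dd^*$ together with the identity $c^*f(c)=-c^*+h(c)$ already established. Once $Rf(c)$ is known to be two-sided, it annihilates $V^f=\rho(R)$, because $Rf(c)\cdot R=Rf(c)=\ker\rho$; thus $Rf(c)\subseteq\Ann_R(V^f)$. The reverse inclusion is automatic, since for any $a\in\Ann_R(V^f)$ one has $a=a\cdot 1\in aR\subseteq\ker\rho=Rf(c)$. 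This yields $\Ann_R(V^f)=Rf(c)$, completing the proof.
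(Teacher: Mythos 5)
Your argument is correct, but it reaches the two substantive conclusions by a genuinely different route than the paper. For the injectivity of $\widehat{\rho}_{f(c)}$, the paper writes $f(c)=cg(c)-1$ and iterates $r(cg(c))^j=r$ until the starred parts of $r$ are absorbed, landing $r$ in the ordinary path algebra $K\mathcal{T}$, where a degree argument finishes; you instead split along the socle, killing the image of $r$ in $R/\Soc(R)\cong K[t,t^{-1}]$ because that ring is a domain, and then using the triangular (diagonal $=f(0)=-1$) action of right multiplication by $f(c)$ on the matrix units of $\Soc(R)$. Both are valid, and yours makes the role of $f(0)\neq 0$ equally visible. For $\Ker\rho=Rf(c)$ and part (2), the paper imports the computation of $\Ann_R(V^f)$ from \cite{AR} (as the two-sided ideal $\langle w, f|_v(c)\rangle=\langle f(c)\rangle$) and shows $\Ker\rho\subseteq\Ann_R(V^f)$ via the $K'$-module structure of $V^{\overline x}$; you instead prove $R=G^f+Rf(c)$ by reducing the basis of Lemma~\ref{basisofToeplitz} modulo $Rf(c)$, conclude by a dimension count, and then verify two-sidedness of $Rf(c)$ by hand --- a self-contained route that in effect re-derives the division algorithm of Proposition~\ref{prop:divalg} along the way. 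Two small slips in the bookkeeping, both harmless: $d^*(c^*)^j\in Rf(c)$ does not follow ``at once'' from $d^*\in Rf(c)$ and the left-ideal property, since $(c^*)^j$ multiplies on the \emph{right}; it needs the same downward induction you use for the other starred terms (e.g.\ $d^*(c^*)^jf(c)=-d^*(c^*)^j+\sum_{i\ge1}k_i\,d^*(c^*)^{j-i}$). Likewise your $h(c)$ involves $v$ rather than $1_R$, so it lies in $G^f$ only modulo $Kw\subseteq Rf(c)$.
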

\begin{proof}
We have already observed that $\rho^f$ is surjective, and thus  $\rho$ is as well.

For the injectivity of $\widehat{\rho}_{f(c)}$, we note that any element $f(x)\in\mathcal F$ can be written as $f(x)=xg(x)-1$, and so $f(c)=cg(c)-1\in R$,
 for a suitable polynomial $g(x)\in K[x]$.  Let $r\in R$ such that $\widehat{\rho}_{f(c)}(r)=0$.   So $r(cg(c)-1) = 0$, thus  $rcg(c) = r$, which recursively gives $r(cg(c))^j = r$ for any $j \geq 1$.  Now write $r=\sum_{i=1}^{n} k_i{\alpha_i}{\beta_i}^*$, where the $\alpha_i$ and  $\beta_i$ are in $ {\rm Path}(\mathcal{T})$.   We note that, for any   $\beta \in {\rm Path}(\mathcal{T})$, there exists a suitable $m_{\beta}$ such that    $\beta^* (cg(c))^{m_\beta}$ is either $0$ or an element of $K\mathcal{T}$.   
Now let $N$ be the maximum  in the set $\{ m_{\beta_1}, m_{\beta_2}, \dots, m_{\beta_n}\}$.  Then the above discussion shows that   $r (cg(c))^N$ is an element of $R$ of the form $\sum_{i=1}^{n} k_i \gamma_i$, where  $\gamma_i \in K\mathcal{T}$ for $1\leq i \leq n$; that is, $r (cg(c))^N \in K\mathcal{T}$.     But $r (cg(c))^N = r$, so that $r \in K\mathcal{T}$.    However,  the equation  $r(cg(c)) = r$  (i.e., $rf(c) = 0$)    has only the zero  solution in $K\mathcal{T}$   by a degree argument.      So $r = 0$.

We now show $\Ker \rho=Rf(c)$.  Using \cite[Lemma 3.2]{AR},  we get that 
 the annihilator of $V^f$ is the two sided ideal $I=\langle w, f\big|_v(c)\rangle$. Notice that $w=-w(cg(c)-1)=-wf(c)$;
therefore, in the notation used herein, we have  $I=\langle f(c)\rangle$. Clearly $I$ is contained in the kernel of $\rho$.
Let $r\in \Ker\rho$; to prove that $r\in I$ we have to check that $r\star \overline x^{i}c^\infty=0$ for $i=0,...,  n-1$, in other words, that  left multiplication by $r$ annihilates all the elements of a $K$-basis of $V^f$. We consider the left $L_{K'}(\mathcal{T})$-module $V^{\overline x}$. Since $\overline x^{ i}$ is a scalar in  $L_{K'}(\mathcal{T})$, and $r\star c^\infty=0$ we have the following equality in $V^{\overline x}$:
\[r\star \overline x^{i}c^\infty=\overline x^{ i} r\star c^\infty=0.\]
Since $V^{\overline x}=V^f$ as abelian groups, the desired result follows.  
%
%
%
%
%
\end{proof}

\begin{remark}\label{injectiveusesournotation}  The injectivity of right multiplication by $f(c)$ relies on the fact that the
constant term of $f(c)$ is $- 1_R$.
\end{remark}

\begin{lemma}\label{lemma:intersezione}
For any $f(x)\in \mathcal{F}$, the intersection of $Rf(c)$ with $G^f$ is 0.
\end{lemma}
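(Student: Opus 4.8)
The plan is to read this statement directly off the two facts already established in the construction of $V^f$ and in Lemma \ref{lemma:seb}, so that the intersection computation collapses to a single injectivity statement. Recall that $\rho : R \to V^f$ denotes the action map $r \mapsto r \star c^\infty$, and that by Lemma \ref{lemma:seb}(1) its kernel is precisely $Rf(c)$. On the other hand, the construction of $V^f$ exhibits the $K$-linear map $\rho^f : G^f \to V^f$, $c^i \mapsto \overline{x}^{\,i} c^\infty$ (for $0 \le i \le n-1$), as an isomorphism of $n$-dimensional $K$-vector spaces, and records that $\rho^f$ is exactly the restriction of $\rho$ to $G^f$. The key observation is therefore that $\rho$ is injective on $G^f$.

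First I would take an arbitrary element $z \in Rf(c) \cap G^f$. Since $z \in Rf(c) = \Ker \rho$, we get $\rho(z) = z \star c^\infty = 0$. Since $z \in G^f$, the value $\rho(z)$ coincides with $\rho^f(z)$. As $\rho^f$ is an isomorphism, it is in particular injective, so $\rho^f(z) = 0$ forces $z = 0$. Hence $Rf(c) \cap G^f = 0$, as claimed. Schematically, $Rf(c) \cap G^f = \Ker\rho \cap G^f = \Ker\!\left(\rho|_{G^f}\right) = \Ker \rho^f = 0$.

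The only point that genuinely needs checking is the compatibility that makes the argument this short: that $\rho^f$ really is the restriction of the global action map $\rho$, which is verified on the spanning set via $\rho(c^i) = c^i \star c^\infty = \overline{x}^{\,i}c^\infty = \rho^f(c^i)$, together with the fact that $1, c, \dots, c^{n-1}$ are $K$-linearly independent in $R$ (equivalently $\dim_K G^f = n$), which itself follows because $\rho^f$ carries them bijectively onto a $K$-basis of the $n$-dimensional space $V^f$. Given these, no real obstacle remains: the entire content of the lemma is already carried by the injectivity of $\rho^f$ recorded during the construction of $V^f$, and the role of Lemma \ref{lemma:seb} is merely to identify $\Ker\rho$ with $Rf(c)$.
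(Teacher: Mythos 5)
Your proof is correct and is essentially the paper's own argument: both take $\ell \in Rf(c)\cap G^f$, use Lemma \ref{lemma:seb}(1) to get $\rho(\ell)=0$, identify $\rho(\ell)$ with $\rho^f(\ell)$ on $G^f$, and conclude $\ell=0$ from the injectivity of $\rho^f$ (the paper phrases this by applying the explicit inverse $\sigma^f$, which is the same thing). No gaps.
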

\begin{proof}
If $\ell
$ belongs to $Rf(c)\cap G^f$, then $\rho(\ell)=0$ by Lemma \ref{lemma:seb}(1), so that 
$$0=\sigma^f(0) = \sigma^f(\rho(\ell))=\sigma^f(\rho^f(\ell)) =\ell$$
 (using $\rho^f(\ell) = \rho(\ell)$ since $\ell \in G^f$). 
\end{proof}

\begin{proposition}[The Division Algorithm by $f(c)$]\label{prop:divalg}   Let $f(x) \in \mathcal{F}$.  For any $\beta\in R$ there exists unique $q_\beta\in R$ and $r_\beta\in G^f$ such that
\[\beta=q_\beta f(c)+r_\beta.\]
\end{proposition}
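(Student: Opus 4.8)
The plan is to read off both existence and uniqueness directly from the short exact sequence of Lemma \ref{lemma:seb}, together with the fact that $\rho^f \colon G^f \to V^f$ is an isomorphism with inverse $\sigma^f$. The conceptual content is that $G^f$ is precisely a complete set of coset representatives for $\Ker \rho = Rf(c)$ inside $R$; in other words, the asserted decomposition amounts to the internal direct sum $R = Rf(c) \oplus G^f$ of $K$-vector spaces, refined by uniqueness of the quotient, which comes from injectivity of right multiplication by $f(c)$.

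For existence I would first produce the remainder. Given $\beta \in R$, set $r_\beta := \sigma^f(\rho(\beta)) \in G^f$. Since $\rho^f$ is the restriction of $\rho$ to $G^f$ and $\rho^f \circ \sigma^f = \mathrm{id}_{V^f}$, we obtain $\rho(r_\beta) = \rho^f(\sigma^f(\rho(\beta))) = \rho(\beta)$, whence $\rho(\beta - r_\beta) = 0$. By Lemma \ref{lemma:seb}(1) this gives $\beta - r_\beta \in \Ker \rho = Rf(c)$, so $\beta - r_\beta = q_\beta f(c)$ for some $q_\beta \in R$; that is, $\beta = q_\beta f(c) + r_\beta$ with $r_\beta \in G^f$, as required.

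For uniqueness, suppose $\beta = q_\beta f(c) + r_\beta = q'_\beta f(c) + r'_\beta$ with $q_\beta, q'_\beta \in R$ and $r_\beta, r'_\beta \in G^f$. Then $(q_\beta - q'_\beta)f(c) = r'_\beta - r_\beta$ lies in $Rf(c) \cap G^f$, which is $0$ by Lemma \ref{lemma:intersezione}; hence $r_\beta = r'_\beta$ and $(q_\beta - q'_\beta)f(c) = 0$. Finally, injectivity of $\widehat{\rho}_{f(c)}$ (the left-hand map of the exact sequence in Lemma \ref{lemma:seb}) forces $q_\beta = q'_\beta$, completing the argument.

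I do not anticipate a serious obstacle, since every ingredient has already been established; the proof is essentially an assembly of the preceding lemmas. The only care needed is bookkeeping: one must use that $\rho^f$ is \emph{literally} the restriction of $\rho$ (so that $r_\beta$ and $\beta$ share the same image under $\rho$), and one must keep the two uniqueness arguments separate—invoking $Rf(c) \cap G^f = 0$ for the remainder and invoking injectivity of $\widehat{\rho}_{f(c)}$ for the quotient.
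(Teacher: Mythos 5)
Your proof is correct and follows the same overall strategy as the paper: the same choice of remainder $r_\beta = \sigma^f(\rho(\beta))$, and the same two-step uniqueness argument via Lemma \ref{lemma:intersezione} and the injectivity of $\widehat{\rho}_{f(c)}$ from Lemma \ref{lemma:seb}. The one place you genuinely diverge is in showing $\beta - r_\beta \in \Ker\rho$: the paper reduces to the $K$-basis of Lemma \ref{basisofToeplitz} and checks each basis element by hand (computing $r_v$, $r_{c^i}$, $r_{c^i(c^*)^j}$, etc.), whereas you obtain $\rho(r_\beta)=\rho(\beta)$ in one line from the facts that $\rho^f\circ\sigma^f=\mathrm{id}_{V^f}$ and that $\rho^f$ is literally the restriction of $\rho$ to $G^f$ --- both of which are explicitly recorded in Section \ref{simplessection}. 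Your version is shorter and more conceptual, and it correctly isolates the point that needs care (that $r_\beta$ and $\beta$ have the same image under $\rho$, not merely under two different maps); the paper's case-by-case check buys nothing beyond making the coset representatives explicit. No gap.
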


\begin{proof}
Consider the element $r_\beta:=\sigma^f(\rho(\beta))$. Clearly $r_\beta$ belongs to $G^f\subseteq R$. Let us prove that the difference $\beta-r_\beta$ belongs to $\Ker\rho$. By Lemma~\ref{basisofToeplitz}, it is sufficient to prove that $\beta-r_\beta$ belongs to $\Ker\rho$ for $\beta\in\{v, w, d, c^i, c^id, c^i (c^*)^j,  (c^*)^j, d^*(c^*)^j\}$. Whenever $\rho(\beta)=0$, then also $r_\beta=0$ and hence $\beta-r_\beta$ belongs to $\Ker\rho$ in these cases; so the result immediately holds for $\beta = w, d,  c^id,$ and $  d^*(c^*)^j$. For the others:
\[r_v=\sigma^f(c^\infty)=
1_K,\ r_{c^i}=\sigma^f(\overline x^i c^\infty)=c^i,\ r_{c^i(c^*)^j}=\sigma^f(\overline x^{i-j} c^\infty)=\begin{cases}
   c^{i-j}   & \text{if }i>j\geq 0, \\
    1_K   & \text{if }i=j\geq 0, \\
     (c^*)^{j-i}   & \text{if }0\leq i<j, 
\end{cases}\]
and clearly
$\ v-1_K$, $c^i-r_{c^i}$ (which is $0$),  and $c^i(c^*)^j-c^{i-j}$ for $i>j$, $c^i(c^*)^i-1_K$,  $c^i(c^*)^j-(c^*)^{j-i}$ for $i<j$, belong to $\Ker\rho$. By Lemma~\ref{lemma:seb},   $\Ker\rho=Rf(c)$; therefore $\beta-r_\beta=q_\beta f(c)$ for a suitable $q_\beta\in R$. 

We now prove that $q_\beta\in R$ and $r_\beta\in G^f$ are uniquely determined.
Assume
\[\beta=q_1f(c)+r_1=q_2f(c)+r_2;\]
then we have $r_1-r_2=(q_2-q_1)f(c)\in Rf(c)\cap G^f$, which is 0 by Lemma~\ref{lemma:intersezione}.
Therefore $r_1=r_2$ and $(q_1-q_2)f(c)=r_1-r_2=0$; since by Lemma~\ref{lemma:seb} right multiplication by $f(c)$ is injective, we have $q_1=q_2$.
\end{proof}

\section{The Pr\"ufer modules $U^f$}

For any simple module $V^f$ there exists a uniserial module of infinite length, with all the composition factors isomorphic to $V^f$. We call this module \emph{the Pr\"ufer module} associated to $V^f$, and its construction comes from the general setting described in \cite{AMTPrufer}.

\begin{lemma}\label{lemma:notzerodivnotleftinv}
For any $f(x)\in\mathcal F$, the element $f(c)\in R$ is neither a right zero divisor nor left-invertible.
\end{lemma}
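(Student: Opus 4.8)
The plan is to derive both assertions directly from the short exact sequence established in Lemma~\ref{lemma:seb}, namely
$$0\to R\xrightarrow{\widehat{\rho}_{f(c)}} R\xrightarrow{\rho} V^f\to 0,$$
so that essentially no new computation is needed; these two one-sided properties are exactly the input required to invoke the general Pr\"ufer-module construction of \cite{AMTPrufer} in the sequel, which is why it is worth isolating them here.

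First I would dispose of the claim that $f(c)$ is not a right zero divisor. By definition (following the convention of \cite{Lam}), this amounts to showing that $rf(c)=0$ forces $r=0$, that is, that right multiplication $\widehat{\rho}_{f(c)}\colon R\to R$ is injective. But this injectivity is precisely the exactness of the displayed sequence at the left-hand copy of $R$, which is proved in Lemma~\ref{lemma:seb} (using that the constant term of $f(c)$ is $-1_R$, cf.\ Remark~\ref{injectiveusesournotation}). Hence the first half is immediate.

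Next, for the claim that $f(c)$ is not left-invertible, I would argue by contradiction. Suppose $sf(c)=1_R$ for some $s\in R$. Then $1_R$ lies in the left ideal $Rf(c)$, which forces $Rf(c)=R$. However, by Lemma~\ref{lemma:seb}(1) we have $Rf(c)=\Ker\rho$, and since $\rho$ is surjective onto $V^f$ with $V^f\neq 0$ (indeed $V^f$ is simple by Lemma~\ref{Lemma:Vsemplice}), the ideal $\Ker\rho$ is proper, in fact maximal. Thus $1_R\notin Rf(c)$, a contradiction, so no left inverse exists.

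There is no serious obstacle here; the only care needed is in matching the one-sided terminology (``right zero divisor'', ``left-invertible'') to the correct side of the maps $\widehat{\rho}_{f(c)}$ and $\rho$, and in invoking the appropriate part of Lemma~\ref{lemma:seb}. The one genuine point to keep in mind is that one-sided invertibility is really possible in $R$, since $R$ fails to be directly finite; hence non-left-invertibility must be argued rather than taken for granted, and the properness of $Rf(c)$ guaranteed by $V^f\neq 0$ is exactly what rules it out.
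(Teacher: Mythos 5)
Your proof is correct and follows essentially the same route as the paper: both parts are read off from the short exact sequence of Lemma~\ref{lemma:seb}, with non-right-zero-divisor coming from the injectivity of $\widehat{\rho}_{f(c)}$ and non-left-invertibility from the fact that $Rf(c)=\Ker\rho$ is a proper (indeed maximal) left ideal, which is just a restatement of the paper's observation that $f(c)\star c^\infty=0$ while $c^\infty\neq 0$.
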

\begin{proof}
The element $f(c)\in R$ is not a right zero divisor, since the right multiplication $\widehat{\rho}_{f(c)}:R\to R$ is injective by Lemma~\ref{lemma:seb}.   By that same Lemma we also have   $f(c)\star c^\infty=0$ in $V^f$,  and so $f(c)$ is not left invertible in $R$.
\end{proof}

The upshot of Lemma \ref{lemma:notzerodivnotleftinv} is that  we can apply the construction of the Pr\"ufer module described in \cite[Section 2]{AMTPrufer} with $a=f(c)$. For each natural number $n\geq 1$, set
\begin{itemize}
\item $M_n^f:=R/Rf^n(c)$, the non-zero cyclic left $R$-module generated by $1 + Rf^n(c)$.
\item $\eta^f_n:R\to M_n^f$ the canonical projection.
\item $\theta^f_n:Rf(c)\to M^f_n$, $f(c)\mapsto 1+Rf^n(c)$.
\item $\psi_{i,\ell}:M_i^f\to M_\ell^f$, $1+Rf^i(c)\mapsto f^{\ell-i}(c)+Rf^\ell(c)$ for each $i\leq\ell$; the cokernel of $\psi_{i,\ell}$ is isomorphic to $M_{\ell-i}^f$.
\end{itemize}
With this notation, the diagram
$$\xymatrix{ R \ar[r]^{\widehat{\rho}_{f(c)}}\ar[d]_{{\eta^f_{n-1}}} & R \ar[d]^{{\eta^f_{n}}}\\
 M_{n-1}^f\ar[r]^{\psi_{n-1,n}} & M_n^f  } \ \ \mbox{or,  equivalently,  the diagram }  
  \xymatrix{ Rf(c) \ar@{^(->}[r]^{i}\ar[d]_{{\theta^f_{n-1}}} & R \ar[d]^{{\eta^f_{n}}}\\
  M_{n-1}^f\ar[r]^{\psi_{n-1,n}} & M_n^f }
$$
 \noindent
   is a pushout diagram.
By Lemma~\ref{Lemma:Vsemplice}, $M_1^f\cong V^f$ is a simple $R$-module. 

We now establish the key property of the modules $\{M_i^f \ | \ i\in \N\}$ which will allow us to further apply additional machinery built in \cite{AMTPrufer}.  
\begin{lemma}\label{lemma:equation}
Let $f(x) \in \mathcal{F}$.  Then the equation $f(c)\mathbb X=1+Rf^{n}(c)$ has no solutions in the left $R$-module $M^f_{n}$.
\end{lemma}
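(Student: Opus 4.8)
The plan is to reduce the equation modulo the larger left ideal $Rf(c)$ and to exploit the fact that $f(c)$ acts as zero on the simple quotient $V^f$. Suppose, toward a contradiction, that some $\mathbb X = r + Rf^n(c)$ with $r\in R$ satisfies $f(c)\mathbb X = 1 + Rf^n(c)$ in $M_n^f = R/Rf^n(c)$. Unwinding the definition of the module operation, this says precisely that $f(c)r - 1 \in Rf^n(c)$.

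First I would record the containment $Rf^n(c)\subseteq Rf(c)$, which holds because $f^n(c) = f^{n-1}(c)f(c)$, so that $sf^n(c) = (sf^{n-1}(c))f(c)\in Rf(c)$ for every $s\in R$. (Here one uses that powers of the loop multiply by $c^ic^j = c^{i+j}$, so that expressions of the form $g(c)$ multiply exactly like polynomials.) Consequently $f(c)r - 1 \in Rf(c) = \Ker\rho$ by Lemma~\ref{lemma:seb}(1), and applying the left $R$-module homomorphism $\rho\colon R\to V^f$ yields $\rho(f(c)r) = \rho(1)$.

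Next I would evaluate both sides inside $V^f$. On the right, $\rho(1) = 1\star c^\infty = v\star c^\infty + w\star c^\infty = c^\infty$, a nonzero element of $V^f$. On the left, using that $\rho$ is left $R$-linear together with $f(c)\in Rf(c) = \Ann_R(V^f)$ (Lemma~\ref{lemma:seb}(2)), we obtain $\rho(f(c)r) = f(c)\star(r\star c^\infty) = 0$. This forces $c^\infty = 0$, contradicting the fact that $c^\infty$ spans the one-dimensional $K'$-space underlying $V^f$, and the lemma follows.

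The argument is short, and I do not expect a serious technical obstacle; the crux is conceptual rather than computational. The one point requiring care is the reduction step: one must observe that, although $1 + Rf^n(c)$ is a genuinely nonzero (indeed generating) element of the comparatively large module $M_n^f$, multiplication by $f(c)$ can never produce it, since passing to the simple quotient $V^f \cong R/Rf(c)$ collapses the image of $f(c)$ to $0$ while keeping the image of $1$ nonzero. Verifying $Rf^n(c)\subseteq Rf(c)$, so that the projection $M_n^f\to V^f$ is well defined, is the only place where the hypothesis $n\geq 1$ enters.
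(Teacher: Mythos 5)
Your argument is correct, and it takes a genuinely different route from the paper's. You reduce modulo the larger ideal $Rf(c)\supseteq Rf^n(c)$ and observe that a solution would force $\rho(f(c)r)=\rho(1)$ in $V^f$, which is impossible because $f(c)\in Rf(c)=\Ann_R(V^f)$ (Lemma~\ref{lemma:seb}(2)) kills the left side while $\rho(1)=c^\infty\neq 0$; equivalently, $f(c)M_n^f$ lands in the unique maximal submodule $Rf(c)/Rf^n(c)$ of the uniserial module $M_n^f$, so it cannot contain the generator. The paper instead first normalizes the representative of $\mathbb X$ via repeated use of the Division Algorithm (Proposition~\ref{prop:divalg}), writing it as $g_1+g_2f(c)+\cdots+g_nf^{n-1}(c)$ with $g_i\in G^f$, and then uses the commutation of the $g_i$ with $f(c)$ to conclude $-1\in Rf(c)$, contradicting the non-left-invertibility of $f(c)$ (Lemma~\ref{lemma:notzerodivnotleftinv}). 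Both proofs ultimately hinge on $1\notin Rf(c)$, but yours gets there by exploiting the two-sidedness of $Rf(c)$ rather than by explicit computation, which makes it shorter and more conceptual. What the paper's computation buys is the normal form for elements of $M_n^f$, which is explicitly reused later (in the proof that $U^f$ is injective, where the representative $m=g_1+g_2f(c)+\cdots+g_nf^{n-1}(c)$ is chosen ``as noted in the proof of Lemma~\ref{lemma:equation}''); if one adopted your proof, that normal form would still need to be recorded somewhere. Your closing remark correctly isolates where $n\geq 1$ is used, namely in the containment $Rf^n(c)\subseteq Rf(c)$.
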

\begin{proof}
Let $m+Rf^n(c)\in M^f_n$, with $m\in R$. By a repeated application of Proposition~\ref{prop:divalg}, we have
$$m=q_1f(c)+g_1, \ q_1=q_2f(c)+g_2, \ ... \ ,  \ q_{n-1}=q_{n}f(c)+g_{n},$$
 where the elements $g_i$ ($1\leq i \leq n)$ belong to $G^f$. Therefore
 \[m-\big(g_1+g_2f(c)+\cdots+g _nf^{n-1}(c)\big)\in Rf^n(c).\]
 In particular we can assume that the representative $m$ of the coset $m+Rf^n(c)$ is equal to $g_1+g_2f(c)+\cdots+g _nf^{n-1}(c)$.
Assume $f(c)m+Rf^n(c)=1+Rf^n(c)$. Then $f(c)m-1$ belongs to $Rf^n(c)$.
Therefore 
\[f(c)\big(g_1+g_2f(c)+\cdots+g _nf^{n-1}(c)\big)-1\]
belongs to $Rf^n(c)$. Since as noted above $f(c)g_i=g_if(c)$ for each $1\leq i \leq n$, we get
\[-1+g_1f(c)+ g_2f^2(c) + ...+g_{n}f^{n}(c)\in Rf^n(c).\]
Then $-1=rf(c)$ for a suitable $r\in R$ and hence $f(c)$ would be left invertible in $R$, which contradicts Lemma \ref{lemma:notzerodivnotleftinv}. 
\end{proof}

With Lemma \ref{lemma:equation} established, we may apply \cite[Proposition~2.2]{AMTPrufer} to conclude that each left $R$-module $M^f_n$ ($n\in \N$)  is uniserial of length $n$. We define
 $$U^f:=\varinjlim\{M_i^f,\psi_{i,j}\}_{i\leq j},$$
  and, for each $i\in \N$, the induced monomorphism
  $$\psi_i:M_i^f\to U^f.$$
By  \cite[Proposition~2.4]{AMTPrufer}, 
 $U^f$ is uniserial and artinian.

For each $n\in \mathbb N$, the element 
$$\alpha_{n,f} :=\psi_n(1+Rf^n(c))$$
 is a generator of the submodule $\psi_n(M_n^f)$ of $U^f$. In the sequel, to simplify the notation,  we will denote by $M_n^f$ the submodule $\psi_n(M_n^f)$ of $U^f$, in fact identifying $M_n^f$ with its image in $U^f$ through the monomorphism $\psi_n$.  Let $r\alpha_{n,f}=r+Rf^n(c)$ be a generic element of $U^f$; applying  the Division Algorithm (Proposition \ref{prop:divalg})  $n-1$ times, we get
\begin{eqnarray*}
r\alpha_{n,f}=r+Rf^n(c)& =& g^f_0+g^f_1f(c)+\cdots+g^f_{n-1}f^{n-1}(c)+Rf^n(c) \\
&   =&
\big(g^f_0+g^f_1f(c)+\cdots+g^f_{n-1}f^{n-1}(c)\big)\alpha_{n,f}
\end{eqnarray*}
for suitable $g^f_0$, ..., $g^f_{n-1}\in G^f$.

\medskip

\begin{remark}\label{Vfnotinjective} The goal of this article is to study injective modules over $L_K(\mathcal{T})$.  As an immediate consequence of Lemma \ref{lemma:equation}, we see that any $L_K(\mathcal{T})$-module of the form $M_i^f$ is {\it not} injective, because the map $\psi: R \to M_i^f$ defined by setting $\psi(1) = 1+ Rf^i(c)$ does not factor through the monomorphism $\widehat{\rho}_{f(c)}: R\to R$.  In particular, the simple module $M_1^f \cong V^f$ is not injective.    However,  in the next section,   we will show that each $U^f =\varinjlim\{M_i^f\} $ {\it is}  an injective left $L_K(\mathcal{T})$-module.
\end{remark}

\section{The left ideals in $L_K(\mathcal{T})$}

In order to test whether a module is injective by applying Baer's criterium, 
we must have available a complete description of the left  ideals in $L_K(\mathcal{T})$. We will show that any ideal of $R$ is either a direct summand of a left ideal of the form $Rp(c)$ (where $p(x) \in K[x]$ has $p(0) = 1$), or a direct summand of ${\rm Soc}(R)$. We recall that $\mathcal P$ denotes the set of polynomials $p(x)\in K[x]$ with $p(0)\not=0$.

\begin{remark}\label{radicalproperties}
We collect up in this remark some properties of $J := {\rm Soc}(L_K(\mathcal{T}))$,  the socle of $R$.   It is well-known (or see \cite[Theorem 2.6.14]{TheBook}) that $J = \langle w \rangle$ as a two-sided ideal. Further, as left $R$-ideals, $$J = Rw \oplus (\oplus_{i\in \mathbb Z^+} Rc^idd^* (c^*)^i) = Rw \oplus (\oplus_{i\in \mathbb Z^+} Rd^* (c^*)^i) .$$ 
Moreover, each summand of the form  $Rc^idd^* (c^*)^i$ is isomorphic to the simple module $Rw$.

  It has been noted many places (see e.g.  \cite[Example 4.5]{Ranga}) 
   that  $R / J \cong K[x,x^{-1}]$ as $K$-algebras.   This isomorphism is also as left $R$-modules (and left $R/J$-modules), which is not hard to see directly.  Indeed, the  standard monomials in  $R$  end (on the right)  in a term having one of the forms $v, w, d, d^*, c^i, c^id, c^i (c^*)^j,  (c^*)^j,$ or $ d^*(c^*)^j$; and we have
\[w\equiv d\equiv d^* \equiv c^id\equiv d^*(c^*)^j\equiv 0 \ \mbox{mod} J, \ \ \mbox{while} \ \ 
v\equiv  cc^*\equiv c^*c \equiv 1  \ \mbox{mod} J.\]
So the only terms which survive mod $J$ are powers of $c$ (positive or negative).

Then the standard bijective correspondence between left ideals of $R$ which contain $J$ and submodules of $R/J$, together with the well-known principal ideal structure of $K[x,x^{-1}]$, yields that every left ideal of $R$ which properly contains $J$ is of the form $Rp\big|_v(c)$ for some $p(x)\in \mathcal P$. 
But because $w\in J$, and we are assuming that $J \subseteq Rp\big|_v(c) $, we get that $Rp\big|_v(c) = Rp(c)$.   The upshot is that every left ideal of $R$ which properly contains $J$ is of the form $Rp(c)$ for some $p(x) \in \mathcal P$.  \hfill $\Box$ 
\end{remark}

\medskip

\begin{proposition}\label{homJVfis0}
Let $f(x)\in \mathcal{F}$.  Then ${\rm Hom}_R(J, U^f) = \{0\}$. 
\end{proposition}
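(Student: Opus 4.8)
The plan is to use the decomposition of $J = \Soc(R)$ from Remark \ref{radicalproperties} to reduce the problem to understanding homomorphisms from the simple module $Rw$ into $U^f$, and then to show no such nonzero homomorphism exists. Recall that as a left $R$-module, $J = Rw \oplus (\oplus_{i\in \mathbb Z^+} Rc^idd^* (c^*)^i)$, where each summand $Rc^idd^*(c^*)^i$ is isomorphic to $Rw$, which is the simple module associated to the sink $w$. Since $\Hom_R(-, U^f)$ turns direct sums in the first variable into direct products, it suffices to prove that $\Hom_R(Rw, U^f) = \{0\}$; the vanishing on each summand then forces every homomorphism out of $J$ to be zero.

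The heart of the argument is therefore to show $\Hom_R(Rw, U^f) = \{0\}$. Since $Rw$ is simple, any nonzero homomorphism $Rw \to U^f$ would be injective, embedding a copy of the simple module $Rw$ into $U^f$. But $U^f$ is uniserial with all composition factors isomorphic to $V^f$ (its socle is $M_1^f \cong V^f$), as established in the construction of the Pr\"ufer module. Hence any simple submodule of $U^f$ is isomorphic to $V^f$. So it is enough to show that $Rw \not\cong V^f$ as left $R$-modules. First I would observe that $w$ annihilates $V^f$: indeed $w \in \Ann_R(V^f) = Rf(c)$ by Lemma \ref{lemma:seb}(2), using that $w = -wf(c) \in Rf(c)$ as noted in the proof of that lemma. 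Concretely, $w\star(\overline{x}^i c^\infty) = 0$ for all $i$ because $w\star c^\infty = 0$ in the defining action. On the other hand, $w$ acts as the identity on the generator of $Rw$ (as $w = w\cdot w$ and $Rw = Rw$), so $w$ does not annihilate $Rw$. Two simple modules on which a fixed central-enough idempotent acts differently cannot be isomorphic; thus $Rw \not\cong V^f$.

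The main obstacle I anticipate is pinning down precisely why $\Hom_R(Rw, U^f)=0$ rigorously, namely confirming that the \emph{only} simple submodule of $U^f$ is its socle $V^f$ and that $Rw$ is genuinely non-isomorphic to $V^f$. The uniseriality of $U^f$ (from \cite[Proposition~2.4]{AMTPrufer}) is the key structural input guaranteeing the socle is simple and isomorphic to $V^f$, so any simple submodule must coincide with the socle. The non-isomorphism $Rw \not\cong V^f$ is then the crux, and the cleanest route is the idempotent-action comparison above: the vertex idempotent $w$ acts as zero on $V^f$ but nonzero on $Rw$, and since these actions are preserved by any $R$-module isomorphism, the two simples are distinct. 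I would present this as the decisive step, with the direct-product reduction over the socle summands as the routine framing around it.
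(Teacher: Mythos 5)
Your proposal is correct and follows essentially the same route as the paper: decompose $J$ as a direct sum of copies of $Rw$, reduce via $\Hom_R(\oplus_i Rw, U^f)\cong\prod_i\Hom_R(Rw,U^f)$, and kill $\Hom_R(Rw,U^f)$ by observing that every simple submodule of $U^f$ is isomorphic to $V^f$ while $wV^f=0$ (the paper phrases this as $\Hom_R(Rw,V^f)\cong wV^f=\{0\}$ and invokes the composition-factor statement from \cite[Proposition 2.2]{AMTPrufer} rather than uniseriality, but the content is the same). Note that no centrality of the idempotent $w$ is needed for your final step: any $\varphi\colon Rw\to V^f$ satisfies $\varphi(w)=w\varphi(w)\in wV^f=0$, which already forces $\varphi=0$.
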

\begin{proof}
For any $f(x)\in \mathcal{F}$ we have ${\rm Hom}_R(Rw, V^f) \cong w V^f = \{0\}$, because $V^f $ is generated as a $K$-space by elements of the form $\overline{x}^i c^\infty$ ($0\leq i \leq {\rm deg}(f)-1$), and $w \overline{x}^i c^\infty=  \overline{x}^i wc^\infty= 0$.  

By \cite[Proposition 2.2]{AMTPrufer}, the  composition factors of the finitely generated submodules of $U^f$ are isomorphic to $V^f$.   This together with the previous paragraph gives  ${\rm Hom}_R(Rw, U^f) = \{0\}$.

As noted in Remark \ref{radicalproperties},  $J = Rw \oplus (\oplus_{i\in \mathbb Z^+} Rc^idd^* (c^*)^i) \cong \oplus_{i\in \mathbb Z^+}Rw.$   Then ${\rm Hom}_R(J, U^f) \cong {\rm Hom}_R(\oplus_{i\in \mathbb Z^+}Rw, U^f) \cong \prod_{i\in \mathbb Z^+}{\rm Hom}_R(Rw, U^f) =  \prod_{i\in \mathbb Z^+}\{0\} = \{0\}$.
\end{proof}

\begin{proposition}\label{structureofleftideals}
Let $I$ be a left ideal of $R$.   Then either:

1) There exists $p(x) \in \mathcal{P}$ for which $I$ is a direct summand of $Rp(c)$, or 

2)  $I$ is a direct summand of $J = {\rm Soc}(R)$.

\end{proposition}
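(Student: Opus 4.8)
The plan is to split into two cases according to whether or not $I$ is contained in the socle $J = \Soc(R)$, relying on the two structural facts recorded in Remark \ref{radicalproperties}: first, that $J$ is a (possibly infinite) direct sum of copies of the simple module $Rw$, hence semisimple; and second, that every left ideal strictly containing $J$ is of the form $Rp(c)$ for some $p(x) \in \mathcal{P}$.

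First I would dispose of the case $I \subseteq J$. Since $J$ is semisimple, each of its submodules is a direct summand; in particular $I$ is a direct summand of $J$, which is exactly conclusion (2). No finiteness is needed here, only the general fact that every submodule of a semisimple module splits off.

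The substantive case is $I \not\subseteq J$. Then $I + J$ is a left ideal strictly containing $J$, so by Remark \ref{radicalproperties} I may write $I + J = Rp(c)$ for some $p(x) \in \mathcal{P}$, and the goal reduces to exhibiting a complement to $I$ inside $Rp(c)$. Because $J$ is semisimple, the submodule $I \cap J$ is a direct summand of $J$, say $J = (I \cap J) \oplus J'$ with $J' \subseteq J \subseteq Rp(c)$. I would then verify the two internal-direct-sum conditions: on the one hand $I + J' = I + (I \cap J) + J' = I + J = Rp(c)$, using $I \cap J \subseteq I$; on the other hand $I \cap J' \subseteq (I \cap J) \cap J' = \{0\}$, using $J' \subseteq J$. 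Together these give $Rp(c) = I \oplus J'$, which is conclusion (1).

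The whole argument is short once the input of Remark \ref{radicalproperties} is in hand, so I do not anticipate a genuine obstacle beyond careful bookkeeping of the two intersection/sum identities above. The one point deserving emphasis is the semisimplicity of $J$ and the resulting splitting of $I \cap J$ off of $J$: it is precisely this that produces the complement $J'$ and lets the modular-law computation go through, and for a possibly infinite socle one should appeal to the general characterization of semisimple modules rather than to any finite-length hypothesis.
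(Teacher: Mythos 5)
Your proof is correct and follows essentially the same route as the paper's: both rest on Remark \ref{radicalproperties} (every left ideal properly containing $J$ is some $Rp(c)$) together with the semisimplicity of $J$ to split off $I\cap J$ and produce the complement. The only difference is organizational — you merge the paper's case ``$J\subsetneq I$'' into the general case $I\not\subseteq J$ (where it just gives $J'=0$ and $I=Rp(c)$), and you write out the sum/intersection verification that the paper labels ``straightforward.''
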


\begin{proof}
Case 1.     $J$ is properly contained in $I$.   Then as noted in Remark \ref{radicalproperties} we have $I = Rp(c)$ for some $p(x) \in \mathcal P$, and so we are done in this case.

\medskip

Case 2.   Suppose $I$ is not contained in $J$, and $I$ does not contain $J$.   Consider the left ideal $A = I+J$.  Then $A$ properly contains $J$, so we may apply the Case 1 analysis to $A$, so that $A = Rq(c) $ for some $q(x) \in\mathcal P$.  
Since the socle $J$ is a direct sum of simple left $R$-modules, we have $J = (I\cap J) \oplus B$ for some left ideal $B$ of $R$ contained in $J$.   It is straightforward to show that this gives $A = I \oplus B$.     But then $I$ has been shown to be a direct summand of $A = Rq(c)$, as desired.  
\medskip

Case 3.   Suppose $I$ is contained in $J$.  Then the semisimplicity of $J$ immediately gives that $I$ is a direct summand of $J$.   
\end{proof}

{
\begin{remark}\label{GerritzenandIovanovknewtheideals}
We note that Gerritzen in \cite[Proposition 3.4]{Gerritzen} established that all one-sided ideals of the Jacobson algebra $\mathcal{R}$ are either principal, or contained in the socle of $\mathcal{R}$.     Similarly, Iovanov and Sistko in \cite[Theorem 2 and Corollary 1]{IovanovSistko} establish the same type of result in $\mathcal{R}$, in terms of polynomials in the element $x$ of $\mathcal{R}$.  By a previous observation, the element $c$ of $L_K(\mathcal{T})$ corresponds to the element $y^2x$ of $\mathcal{R}$.   The point to be made here is that while these two results from \cite{Gerritzen} and \cite{IovanovSistko}  are clearly related to the conclusion of  Proposition \ref{structureofleftideals},    Proposition \ref{structureofleftideals} gives a more explicit description of these left ideals,  in a form which will be quite useful for us in the sequel. 
\end{remark}
}

\begin{corollary}\label{simplifyBaer}
In order to apply the Baer criterion to determine the injectivity of a left $R$-module, we need only check injectivity with respect to $J$, and with respect to left ideals of the form $Rp(c)$ for $p(x) \in \mathcal P$. 
\end{corollary}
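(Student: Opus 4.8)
The plan is to deduce the corollary directly from Baer's criterion together with the structural result Proposition \ref{structureofleftideals}. Recall that Baer's criterion asserts that a left $R$-module $M$ is injective if and only if, for every left ideal $I$ of $R$, every homomorphism $\varphi \colon I \to M$ extends to a homomorphism $R \to M$. The forward implication of the corollary is immediate: an injective module certainly has this extension property with respect to the specific ideals $J$ and $Rp(c)$. The substance is the reverse implication, namely that the extension property with respect only to $J$ and to the ideals $Rp(c)$ (with $p(x) \in \mathcal P$) already forces the extension property with respect to every left ideal.

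First I would isolate the following elementary observation: if a left ideal $I$ is a direct summand of a left ideal $L$, and $M$ has the extension property with respect to $L$, then $M$ has the extension property with respect to $I$. Indeed, writing $L = I \oplus I'$ with associated projection $\pi \colon L \to I$, any $\varphi \colon I \to M$ yields $\varphi \circ \pi \colon L \to M$, which restricts to $\varphi$ on $I$; by hypothesis $\varphi \circ \pi$ extends to some $\Phi \colon R \to M$, and then the restriction of $\Phi$ to $I$ equals $\varphi$, as required.

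With this in hand the corollary follows at once. Given an arbitrary left ideal $I$, Proposition \ref{structureofleftideals} presents $I$ as a direct summand either of $Rp(c)$ for some $p(x) \in \mathcal P$, or of $J = \operatorname{Soc}(R)$. In either case the observation of the previous paragraph, applied with $L = Rp(c)$ or $L = J$, shows that the assumed extension property with respect to $L$ transfers to $I$. Hence $M$ has the extension property with respect to \emph{every} left ideal, and Baer's criterion yields that $M$ is injective.

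There is essentially no genuine obstacle here: all of the real work has already been carried out in Proposition \ref{structureofleftideals}, which reduces the (a priori enormous) family of left ideals to the two manageable families $\{J\}$ and $\{Rp(c) \mid p(x) \in \mathcal P\}$. The only point requiring any care is the direct-summand transfer of the extension property, and this is a standard fact of homological algebra, amounting to nothing more than precomposition with a projection.
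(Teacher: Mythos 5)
Your proof is correct and follows exactly the route the paper intends: the corollary is stated as an immediate consequence of Proposition \ref{structureofleftideals}, and the only detail to supply is the standard fact that the Baer extension property passes from a left ideal to any of its direct summands via precomposition with the projection, which you verify correctly.
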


\section{A (minimal) injective cogenerator for $L_K(\mathcal{T})$ }

In this final section we use the machinery developed above to achieve the main goal of this article, namely, to identify a minimal injective cogenerator for $L_K(\mathcal{T})$.   In the first portion of the section we show that the injective envelope of each of the simple modules $V^f$ is the Pr\"{u}fer module $U^f$.  We then proceed to construct, using completely different methods, the injective envelope of the simple module $Rw$.   We finish the section by appropriately combining these two types of injective modules.  

In previous work by the three authors \cite{AMTPrufer},  { modules of the form $U^{x-1}$ over general Leavitt path algebras $L_K(E)$  were shown to be injective, in case the corresponding cycle $c$ is maximal.   
 Establishing injectivity   of such $U^{x-1}$ over the Leavitt path algebra $L_K(E)$ of an arbitrary finite graph $E$ } required an analysis of the structure of  $U^{x-1}$ viewed as a right module over its endomorphism ring.   In the present setting, we need not invoke this right module structure, the reason being that in the particular case $R = L_K(\mathcal{T})$ we have a complete description of the left $R$-ideals, and therefore we are in position to productively use Baer's criterion to establish injectivity of left $R$-modules.  

\bigskip

\subsection{The injective envelope of $V^f$}\label{subsectionV^f}

We start by establishing that  $U^f$ is injective for any $f(x)\in \mathcal{F}$.   By Proposition \ref{homJVfis0} we have $\Hom_R(J,U^f) = 0$.  So   by Corollary~\ref{simplifyBaer}, in order to check the injectivity of $U^f$ it is enough to check the Baer criterion with respect to left ideals of the form $Rp(c)$ for $p(x) \in \mathcal P$.

\begin{lemma}\label{lemma:coprime}
Let $f(x)\in \mathcal{F}$, and let $g(x) \in K[x]$ which is not divisible by $f(x)$. Then there exists a polynomial $\beta(x)\in K[x]$ such that $\beta(c)g(c) \in 1+Rf^n(c)$. In particular,  $g(c)+Rf^n(c)$ is a generator of the uniserial module $M_n^f$.
\end{lemma}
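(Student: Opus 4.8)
```latex
The plan is to exploit the structure of the modules $M_n^f$, together with the key
fact (recorded in the discussion preceding Lemma~\ref{lemma:coprime} and in
Lemma~\ref{Lemma:Vsemplice}) that $M_n^f$ is uniserial of length $n$ with all
composition factors isomorphic to the simple module $V^f \cong M_1^f$.  The
essential idea is that an element $g(c)+Rf^n(c)$ is a generator of the cyclic
uniserial module $M_n^f$ precisely when it does not lie in the unique maximal
submodule.  Since $M_n^f$ is uniserial of length $n$, its unique maximal
submodule is $f(c)M_{n}^f$, which under the identifications set up above is the
submodule generated by $f(c)+Rf^n(c)$.  So the first step is to reduce the claim
``$g(c)+Rf^n(c)$ is a generator'' to the statement that $g(c)+Rf^n(c)$ does
\emph{not} belong to $f(c)M_n^f$, i.e.\ that $g(c)$ is not of the form
$rf(c) + sf^n(c)$ for any $r,s\in R$; equivalently that $g(c)\notin Rf(c)$.

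Next I would verify this non-membership using the coprimality hypothesis.  Since
$f(x)$ is irreducible and does not divide $g(x)$, the polynomials $f(x)$ and
$g(x)$ are coprime in $K[x]$, so there exist $a(x),b(x)\in K[x]$ with
$a(x)f(x)+b(x)g(x)=1$.  Substituting the cycle $c$ (using the notation $g(c)$,
$f(c)$ fixed in the introduction, and recalling that elements of $K[c]$ commute
with one another) gives $a(c)f(c)+b(c)g(c)=1_R$, hence
$b(c)g(c) = 1_R - a(c)f(c)\in 1 + Rf(c)$.  This already produces a polynomial
$\beta(x):=b(x)$ with $\beta(c)g(c)\in 1 + Rf(c)$; to upgrade this to
$1 + Rf^n(c)$ one observes that $1+Rf^n(c) \supseteq 1 + Rf(c)$ is false as sets,
so I would instead argue modulo $Rf^n(c)$ directly: working in the commutative
subalgebra $K[c]$ and passing to $M_n^f = R/Rf^n(c)$, the element
$f(c)+Rf^n(c)$ is nilpotent of index exactly $n$, so $1 - \beta(c)g(c)$ lying in
$Rf(c)+Rf^n(c)=Rf(c)$ means its image in $M_n^f$ is a multiple of the nilpotent
$f(c)+Rf^n(c)$, hence lies in the radical; therefore $\beta(c)g(c)+Rf^n(c)$ is a
unit in the local ring $K[c]/(f^n(c))$ (or, more elementarily, invertible modulo
the radical and hence invertible), which lets me adjust $\beta$ by a further
polynomial factor to achieve $\beta(c)g(c)\in 1 + Rf^n(c)$ exactly.

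Finally, the ``in particular'' clause follows immediately: once
$\beta(c)g(c) \equiv 1 \pmod{Rf^n(c)}$, the coset $g(c)+Rf^n(c)$ generates
$M_n^f$ because $\beta(c)\cdot\big(g(c)+Rf^n(c)\big) = 1+Rf^n(c)$, and
$1+Rf^n(c)$ is by construction a generator of $M_n^f$.  The step I expect to be
the main obstacle is the passage from $1 + Rf(c)$ to $1 + Rf^n(c)$: getting the
exponent right requires care, since the naive B\'ezout identity only controls
things modulo $Rf(c)$.  I would resolve this by working inside the local
commutative ring $K[c]/(f^n(c))$, where $f(c)$ generates the maximal ideal, so
that any element congruent to a unit modulo $f(c)$ is itself a unit; this is the
cleanest route and avoids an induction on $n$, though an inductive argument
lifting the identity one power of $f$ at a time would also work.
```
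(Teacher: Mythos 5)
Your argument is correct, but it takes a genuinely more roundabout route than the paper at the one step you flag as the ``main obstacle.'' The paper's proof is a single application of B\'ezout: since $f(x)$ is irreducible and $f(x)\nmid g(x)$, already $\gcd(f^n(x),g(x))=1$, so there are $\alpha(x),\beta(x)$ with $\alpha(x)f^n(x)+\beta(x)g(x)=1$, and substituting $c$ gives $\beta(c)g(c)=1-\alpha(c)f^n(c)\in 1+Rf^n(c)$ directly --- no lifting from $Rf(c)$ to $Rf^n(c)$ is needed. You instead apply B\'ezout to the pair $(f,g)$ and then upgrade by observing that $1-a(c)f(c)$ is a unit in the local Artinian ring $K[x]/(f^n(x))$; this works (explicitly, multiply by $\sum_{i=0}^{n-1}(a(x)f(x))^i$ to get $1-(a(x)f(x))^n\in 1+(f^n(x))$, then substitute $c$), and everything takes place in the commutative subalgebra $K[c]$, so the substitution is a legitimate ring homomorphism; but it costs an extra step that the sharper choice of gcd avoids. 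Two smaller remarks: your opening reduction of the ``in particular'' clause to non-membership in the unique maximal submodule is never actually used --- as you note at the end, once $\beta(c)g(c)\in 1+Rf^n(c)$ the generation statement is immediate --- and in that unused reduction the notation $f(c)M_n^f$ is misleading for a \emph{left} module (the unique maximal submodule is the image of the left ideal $Rf(c)$, i.e.\ $Rf(c)/Rf^n(c)$, not the set of left multiples of elements of $M_n^f$ by $f(c)$); since you wrote the elements as $rf(c)+sf^n(c)$ you clearly meant the right thing, but the label should be fixed.
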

\begin{proof}
Since $f(x)$ is irreducible, non-divisibility gives $\text{gcd}(f^n(x), g(x))=1$. Then there exist polynomials $\alpha(x)$, $\beta(x)\in K[x]$ such that $1=\alpha(x)f^n(x)+\beta(x)g(x)$. Therefore
$\beta(c)g(c)=1-\alpha(c)f^n(c)$ and hence $\beta(c)g(c)\in 1+Rf^n(c)$.
\end{proof}

\begin{proposition}
Let $f(x) \in \mathcal{F}$.  Then the uniserial left $R$-module $U^f$ is injective.
\end{proposition}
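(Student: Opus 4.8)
The plan is to verify Baer's criterion for $U^f$ with respect to the left ideals $Rp(c)$, where $p(x)\in\mathcal{P}$. By the discussion preceding the statement (Proposition~\ref{homJVfis0} together with Corollary~\ref{simplifyBaer}), the ideal $J$ has already been disposed of, so the only remaining task is to show that every $R$-homomorphism $\phi\colon Rp(c)\to U^f$ extends to a homomorphism $R\to U^f$. Since $Rp(c)$ is generated by $p(c)$ and the right annihilator of $p(c)$ is trivial (as $p(c)$ is a non-zero-divisor by the same degree/constant-term argument used in Lemma~\ref{lemma:seb}, because $p(0)\neq 0$), the map $\phi$ is completely determined by the single element $u:=\phi(p(c))\in U^f$. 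To extend $\phi$ to $R$ it suffices to produce an element $u'\in U^f$ with $p(c)u'=u$; then the extension is $1\mapsto u'$. So the whole problem reduces to \emph{solvability of the equation $p(c)\mathbb{X}=u$ in $U^f$} for every $u\in U^f$ and every $p(x)\in\mathcal{P}$.

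First I would reduce to two cases according to whether $f(x)$ divides $p(x)$. Write $u\in U^f$; since $U^f=\varinjlim M_n^f$, the element $u$ lies in some $M_n^f$, and by the normal-form computation recorded just before Remark~\ref{Vfnotinjective}, I may take $u=h(c)\alpha_{n,f}$ for a polynomial $h(c)$ built from elements of $G^f$. In the case where $f(x)\nmid p(x)$, Lemma~\ref{lemma:coprime} applies: there is $\beta(x)$ with $\beta(c)p(c)\equiv 1\pmod{Rf^n(c)}$, i.e.\ $p(c)$ acts invertibly on $M_n^f$, so $u'=\beta(c)u$ already solves $p(c)u'=u$ inside $M_n^f\subseteq U^f$. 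This handles the ``coprime'' case cleanly and is where Lemma~\ref{lemma:coprime} does its work.

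The main obstacle, and the heart of the proof, is the case where $f(x)\mid p(x)$, say $p(x)=f^k(x)q(x)$ with $f\nmid q$. Here $p(c)$ does \emph{not} act invertibly on any fixed $M_n^f$ — indeed Lemma~\ref{lemma:equation} says $f(c)\mathbb{X}=1+Rf^n(c)$ has no solution \emph{within} $M_n^f$ — and this is precisely the point where I must exploit that $U^f$ is the direct limit rather than a single $M_n^f$. The idea is that multiplication by $f(c)$ on $U^f$ is surjective even though it fails to be surjective on each finite stage: given $u\in M_n^f$, I would use the transition maps $\psi_{n,n+1}$ to lift $u$ to the next stage $M_{n+1}^f$, where by construction of the Prüfer module the generator $\alpha_{n,f}=\psi_{n,n+1}(\alpha_{n,f})$ equals $f(c)\alpha_{n+1,f}$, so $f(c)$ does have a preimage one level up. Iterating $k$ times to absorb the factor $f^k$, and then using the coprime case for the remaining factor $q(c)$, produces the required $u'\in U^f$ solving $p(c)u'=u$. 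The delicate bookkeeping is to keep track of the identifications $M_n^f\subseteq M_{n+1}^f\subseteq U^f$ and to confirm that the element manufactured at a higher stage genuinely maps back to $u$ under $p(c)$; this is where I expect to spend the most care, and where the artinian/uniserial structure of $U^f$ furnished by \cite[Proposition~2.4]{AMTPrufer} guarantees that every element does lie at some finite stage, so the lifting terminates.

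Once solvability of $p(c)\mathbb{X}=u$ is established for all $u\in U^f$ and all $p(x)\in\mathcal{P}$, Baer's criterion is satisfied for every left ideal of the required form, and combined with $\Hom_R(J,U^f)=0$ this yields injectivity of $U^f$ by Corollary~\ref{simplifyBaer}. I would close by remarking that this recovers, in the special case $E=\mathcal{T}$, the injectivity obtained in \cite{AMTPrufer} but now purely through the left-ideal description rather than the endomorphism-ring analysis, consistent with the strategy announced at the start of this section.
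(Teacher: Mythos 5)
Your plan is correct and follows essentially the same route as the paper's proof: reduce via Corollary~\ref{simplifyBaer} and Proposition~\ref{homJVfis0} to extending maps from $Rp(c)$, put $\varphi(p(c))$ in the normal form $g_1+g_2f(c)+\cdots$ so that it commutes with polynomials in $c$, split off the factor $f^\ell$ from $p$ and absorb it by moving $\ell$ stages up the direct limit via the transition maps, and handle the coprime factor with Lemma~\ref{lemma:coprime}. The only cosmetic difference is that you treat the cases $f\nmid p$ and $f\mid p$ separately, whereas the paper writes $p=f^\ell p_0$ once and runs the two mechanisms in a single computation.
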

\begin{proof}
By Proposition \ref{homJVfis0} and Corollary \ref{simplifyBaer}, it suffices to show, for any  $p(x)\in \mathcal P$ and  $\varphi:Rp(c)\to U^f$, that $\varphi$ extends to a map $\overline \varphi:R\to U^f$.  Clearly the zero  map extends to $R$.  So suppose  $\varphi\not=0$. Let $n\in\mathbb N$ be minimal such that $\Im\varphi \subseteq M_{n}^f$, and  write $\varphi(p(c))=m+Rf^n(c)$ for some $m\in R$.  As noted in the proof of Lemma \ref{lemma:equation}, we can choose
\[m=g_1+g_2f(c)+\cdots+g _nf^{n-1}(c),\]
where $g_i \in G^f$ ($1\leq i \leq n$).   In particular,  
 $m$ commutes with all polynomials in $c$.

By the construction of the direct limit $U^f$,  for each $i\geq 0$ we have
\[\varphi(p(c)) =  m + Rf^n(c) =mf^i(c)+Rf^{n+i}(c)=f^i(c)m+Rf^{n+i}(c).
\]
Let $p(x)=f^\ell(x)p_0(x)$ with $\ell\geq 0$ and $f(x)  \nmid  p_0(x)$. By Lemma~\ref{lemma:coprime} there exists $\beta_0(x)\in K[x]$ such that $\beta_0(c)p_0(c)=p_0(c)\beta_0(c)$ belongs to $1+Rf^{n+\ell}(c)$. 
Therefore
\[p(c)(\beta_0(c)m+Rf^{n+\ell}(c))=f^\ell(c)p_0(c)\beta_0(c)m+Rf^{n+\ell}(c)=
f^\ell(c)m+Rf^{n+\ell}(c)=\varphi(p(c)).\]
Thus the morphism $\overline{\varphi}: R\to U^f$ defined by setting $\overline{\varphi}(1) = \beta_0(c)m+Rf^{n+\ell}(c)$ extends $\varphi$.
\end{proof}

\begin{corollary}\label{UfisenvelopeofVf}
Let $f(x) \in \mathcal{F}$.  Then $U^f$ is the injective envelope of $V^f$.
\end{corollary}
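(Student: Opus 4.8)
The plan is to deduce the corollary by combining the injectivity of $U^f$ just established in the preceding proposition with the observation that $V^f$ sits inside $U^f$ as an essential submodule. Recall that an injective envelope of a module $M$ is precisely an injective module $E$ together with an embedding of $M$ into $E$ as an essential submodule. Since $U^f$ has already been shown to be injective, it remains only to identify $V^f$ with an essential submodule of $U^f$.

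First I would record the embedding. By Lemma~\ref{Lemma:Vsemplice} we have $M_1^f \cong V^f$, and under the identification of each $M_n^f$ with its image $\psi_n(M_n^f) \subseteq U^f$ described in the construction of $U^f$, the simple module $V^f$ is realized as the submodule $M_1^f = \psi_1(M_1^f)$ of $U^f$.

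Next I would establish essentiality. By \cite[Proposition~2.4]{AMTPrufer}, the module $U^f$ is uniserial, so its nonzero submodules form a chain under inclusion. I would argue that $M_1^f$ is the unique minimal nonzero submodule of $U^f$: it is simple by Lemma~\ref{Lemma:Vsemplice}, it occupies the bottom of the increasing chain $M_1^f \subseteq M_2^f \subseteq \cdots$ whose union is $U^f$, and any nonzero submodule $N$, being comparable to $M_1^f$ by uniseriality, must either contain $M_1^f$ or be contained in the simple module $M_1^f$ and hence equal it. In either case $N \cap M_1^f = M_1^f \neq 0$. Thus $V^f \cong M_1^f$ is essential in $U^f$, and since $U^f$ is injective, it is the injective envelope of $V^f$.

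Since the genuinely difficult content, namely the verification of the Baer criterion, is already discharged in the proof that $U^f$ is injective, I do not expect any serious obstacle here; the only point requiring care is the confirmation that $M_1^f$ is the minimal nonzero submodule, which is immediate from uniseriality together with the simplicity of $M_1^f$.
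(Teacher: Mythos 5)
Your proposal is correct and follows the paper's own argument: the paper likewise deduces the corollary from the injectivity of $U^f$ together with the essentiality of $V^f$ in $U^f$, which it attributes to uniseriality. You simply spell out the uniseriality-implies-essentiality step in more detail; the content is identical.
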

\begin{proof}
The simple module $V^f$ is essential in $U^f$, since $U^f$ is uniserial.  But the injective envelope of any module is an injective module in which the given module sits as an essential submodule.   
\end{proof}

Of course in general the direct sum of infinitely many injective modules need not be injective.   (Over an arbitrary ring $S$, any infinite direct sum of injectives is injective if and only if $S$ is Noetherian; and clearly $R = L_K(\mathcal{T})$ is non-Noetherian, because, for example, $J$ is a non-finitely-generated left ideal of $R$.)    This observation notwithstanding, we close this subsection with the following.  
\begin{proposition}\label{prop:U}
  Let $U=\oplus_{\lambda\in \Lambda} I_{\lambda}$ where, for each $\lambda\in \Lambda$, there exists  $f(x) \in\mathcal{F}$ such that $I_{\lambda}$ is an injective module isomorphic to $U^f$. Then $U$ is injective.
\end{proposition}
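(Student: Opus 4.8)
The plan is to verify injectivity of $U$ directly via the Baer criterion, using the reduction furnished by Corollary \ref{simplifyBaer}: it suffices to extend to $R$ every homomorphism defined on $J = {\rm Soc}(R)$, and every homomorphism defined on a left ideal of the form $Rp(c)$ with $p(x) \in \mathcal{P}$. The essential point — and the reason the argument succeeds despite $R$ being non-Noetherian, so that an infinite direct sum of injectives is not automatically injective — is that each of these two classes of test ideals is especially well-behaved relative to the direct-sum decomposition of $U$.

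First I would dispose of $J$ by showing that $\Hom_R(J, U) = \{0\}$, so that the only homomorphism $J \to U$ is the zero map, which trivially extends. Recall from Remark \ref{radicalproperties} that $J \cong \oplus_{i\in \mathbb Z^+} Rw$ is a direct sum of copies of the simple module $Rw$. A homomorphism out of a direct sum is determined by its restrictions to the summands, so it suffices to show that each restriction $Rw \to U$ vanishes; composing such a restriction with the projection $U \tra I_\lambda \cong U^{f_\lambda}$ yields an element of $\Hom_R(Rw, U^{f_\lambda})$, which is $\{0\}$ by Proposition \ref{homJVfis0}. Since every coordinate of the restriction vanishes, the restriction itself is zero, and hence $\Hom_R(J, U) = \{0\}$.

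Next I would treat a left ideal $Rp(c)$ with $p(x) \in \mathcal{P}$ together with an arbitrary homomorphism $\varphi : Rp(c) \to U$. Because $Rp(c)$ is cyclic, generated by $p(c)$, the image $\varphi(Rp(c)) = R\cdot \varphi(p(c))$ is generated by the single element $\varphi(p(c)) \in U$. This element has only finitely many nonzero coordinates, so it lies in a finite sub-sum $U' := \oplus_{\lambda \in F} I_\lambda$ for some finite $F \subseteq \Lambda$, and consequently $\varphi$ factors through the inclusion $U' \hookrightarrow U$. Since a finite direct sum of injective modules is injective, $U'$ is injective; thus $\varphi$, viewed as a map $Rp(c) \to U'$, extends to some $\overline{\varphi} : R \to U'$, and composing with $U' \hookrightarrow U$ extends $\varphi$ to all of $R$.

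Having verified the Baer criterion for both classes of test ideals, Corollary \ref{simplifyBaer} then yields that $U$ is injective. I expect the only genuinely delicate point to be the handling of $J$: one must argue via the decomposition $J \cong \oplus_{i} Rw$ and the fact that $\Hom$ out of a direct sum commutes with passing to the summands, rather than naively exchanging $\Hom$ with the direct sum appearing in the \emph{target} $U$. The cyclic case is then routine, since finite generation of $Rp(c)$ is precisely what forces each homomorphism to land in a finite — hence injective — sub-sum of $U$.
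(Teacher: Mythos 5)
Your proposal is correct and follows essentially the same route as the paper: reduce via Corollary \ref{simplifyBaer} to the two classes of test ideals, show $\Hom_R(J,U)=\{0\}$ (the paper does this with a socle comparison, you via the vanishing of $\Hom_R(Rw,U^{f})$ from Proposition \ref{homJVfis0} together with coordinate projections — the same underlying fact), and observe that a map from the cyclic ideal $Rp(c)$ lands in a finite, hence injective, sub-sum. No gaps.
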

\begin{proof}
We again invoke   Corollary \ref{simplifyBaer}.  So we need only establish two steps.  

Step 1: Consider the ideal $J$ and let $\varphi:J\to U$. We show that $\varphi=0$.   Suppose otherwise.   The  image of  $\varphi$ is a  semisimple module, isomorphic to a direct sum of copies of $Rw$. But each $U^f$ has essential socle isomorphic to $V^f$ and so the socle of $U$ is isomorphic to  the direct sum of copies of the $V^f$'s. Since $Rw \not\cong V^f$ for any $f$,  we get a contradiction.

Step 2:   Let $p(x)$ be a polynomial in $\mathcal P$.   If $\varphi: Rp(c) \rightarrow U$ then the image of $\varphi$ is  finitely generated, and so is contained in $\hat{U} \cong  \oplus_{i=1}^n U^{f_i}$ for some appropriate $f_i$'s.  But $\hat{U}$ is injective because each $U^{f_i}$ is (and the sum is finite), and so $f$ extends. 
\end{proof}

\bigskip

\subsection{The injective envelope of $Rw$}\label{subsectionRw}

Having identified the injective envelope of each of the simple modules $V^f$ ($f(x) \in \mathcal{F}$), we now turn our attention to identifying the injective envelope of the simple module $Rw$.   

\begin{lemma}\label{basisofRw}
The set $\{w,d,cd,c^2d,..., c^id,...\}$ is a $K$-basis of the simple module $Rw$.  That is, any element of $Rw$ can be written uniquely as $kw+\sum_{i=0}^n k_ic^id=kw+(\sum_{i=0}^n k_ic^i)d$, with $k, k_i \in K$.
\end{lemma}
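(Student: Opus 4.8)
The plan is to compute the left ideal $Rw$ directly from the explicit $K$-basis of $R$ furnished by Lemma~\ref{basisofToeplitz}. Since right multiplication by $w$ is a $K$-linear endomorphism of $R$ and the listed monomials span $R$, the subspace $Rw$ is spanned by the images $bw$ as $b$ ranges over that basis. So the first step is to evaluate $bw$ for each basis element $b\in\{v,\,w,\,d,\,d^*,\,c^i,\,c^id,\,c^i(c^*)^j,\,(c^*)^j,\,d^*(c^*)^j\}$.

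To carry this out I would use that the vertices $v,w$ are orthogonal idempotents, with $vw=0=wv$ and $ww=w$, together with the observation that a (real or ghost) path $\mu$ in $L_K(\mathcal{T})$ satisfies $\mu w=\mu$ precisely when its range is the vertex $w$, and $\mu w=0$ otherwise. Reading off ranges from the graph $\mathcal{T}$: the elements $v$, $c^i$, $(c^*)^j$, $c^i(c^*)^j$, $d^*$, and $d^*(c^*)^j$ all have range $v$, hence are annihilated on the right by $w$; while $w$, $d$, and $c^id$ have range $w$ and are therefore fixed by right multiplication by $w$. Consequently $Rw$ is the $K$-span of $\{w,\,d,\,cd,\,c^2d,\,\dots,\,c^id,\,\dots\}$.

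Finally, linear independence is immediate: the set $\{w,\,d,\,cd,\,c^2d,\,\dots\}$ is a subset of the $K$-basis of $R$ exhibited in Lemma~\ref{basisofToeplitz}, so its elements are $K$-linearly independent in $R$. Having shown this set both spans $Rw$ and is independent, it is a $K$-basis of $Rw$, which yields at once the asserted unique expression $kw+\sum_{i=0}^n k_ic^id=kw+(\sum_{i=0}^n k_ic^i)d$.

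The only place requiring care — though it is entirely routine — is the bookkeeping of ranges for the ghost-edge basis elements $d^*$, $(c^*)^j$, and $d^*(c^*)^j$, to confirm each has range $v$ and is therefore killed on the right by $w$. I do not expect any genuine obstacle: the whole content of the argument is the reduction, via right multiplication by $w$, to the already-established basis of Lemma~\ref{basisofToeplitz}.
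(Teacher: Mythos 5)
Your proposal is correct and follows essentially the same route as the paper: both arguments reduce the claim to the explicit $K$-basis of $R$ from Lemma~\ref{basisofToeplitz} and read off which monomials survive a right multiplication, with linear independence then being free since the surviving monomials form a subset of that basis. The only (cosmetic) difference is that the paper first observes $Rw=Rd$ and multiplies the basis on the right by $d$, whereas you multiply directly by $w$ and sort the monomials by their terminal vertex; your range bookkeeping ($d^*$, $(c^*)^j$, $c^i(c^*)^j$, and $d^*(c^*)^j$ all ending at $v$ and hence killed by $w$) is accurate.
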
  
\begin{proof}
It is easily shown that $Rw=Rd^*d = Rd$. By Lemma~\ref{basisofToeplitz}, the elements
\[v, w, d, d^*, c^i, c^id, c^i (c^*)^j,  (c^*)^j, d^*(c^*)^j\quad i, j\geq 1
\]
form a $K$-basis of $R$. Since 
\[0=wd=dd=c^idd=c^i (c^*)^jd=(c^*)^jd=d^*(c^*)^jd \quad \forall i, j\geq 1\]
we conclude that a basis of $Rw=Rd$ is formed by multiplying  the remaining elements of the $K$-basis for $R$ on the right by $d$, namely  
\[ vd=d, \ \  d^*d = w, \ \  \mbox{and} \  c^id  \ \ ( i \geq 1),
\]
which gives the result. 
\end{proof}

In the following sense, the simple module $Rw$ behaves similarly to the simple modules $V^f$ (see Remark \ref{Vfnotinjective}).

\begin{proposition}\label{prop:Rwnotinj}
The left $L_K(\mathcal{T})$-module $L_K(\mathcal{T})w$ is not injective.   In particular, the map $\chi = \rho_d: R \to Rw$ (via $1 \mapsto d$) does not factor through the monomorphism $\widehat{\rho}_{f(c)}: R \to R$ for any $f(x)\in \mathcal{F}$.  
  \end{proposition}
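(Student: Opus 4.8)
The plan is to prove the two assertions of Proposition \ref{prop:Rwnotinj} in the natural order: first establish the concrete non-factorization statement, and then deduce non-injectivity from it via the Baer-type obstruction already exploited in Remark \ref{Vfnotinjective}. The key observation is that injectivity of $Rw$ would force the canonical map $\chi=\rho_d:R\to Rw$, $1\mapsto d$, to extend along \emph{every} monomorphism $R\hookrightarrow R$ given by right multiplication $\widehat{\rho}_{f(c)}$; so if I exhibit even a single $f(x)\in\mathcal F$ for which no such extension exists, both claims follow at once.

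First I would set up the factorization problem explicitly. A factorization of $\chi$ through $\widehat{\rho}_{f(c)}$ would be a map $\overline\chi:R\to Rw$ with $\overline\chi\circ\widehat{\rho}_{f(c)}=\chi$, i.e. $\overline\chi(f(c))=d$. Any $R$-linear map out of $R$ is determined by the image of $1$, so writing $\overline\chi(1)=\xi\in Rw$ the condition becomes $\xi f(c)=d$ in $Rw$ (reading $f(c)$ acting on the \emph{right}, as required by the factorization through right multiplication). Using Lemma \ref{basisofRw}, I would write a general element of $Rw$ as $\xi=kw+\bigl(\sum_{i=0}^{m}k_ic^i\bigr)d$ and compute $\xi f(c)$ in the basis $\{w,d,cd,c^2d,\dots\}$. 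The point is that right multiplication by $c$ on these basis elements has a clean, essentially shift-like description (since $c$ acts on the left as the map $c^id\mapsto c^{i+1}d$, I must be careful to track how $f(c)$ multiplies on the right), and the obstruction will be a degree/leading-term incompatibility: the constant term $f(0)=-1_K$ forces $\xi f(c)$ to have a nonzero $w$-component whenever $\xi$ has one, while the equation $\xi f(c)=d$ demands the $w$-component vanish and the ``$d$-part'' be forced, producing a contradiction with the zero constant term on the right side.

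The main obstacle I anticipate is getting the bookkeeping of \emph{right} multiplication correct on the basis $\{w,d,c^id\}$, since the natural module action is on the left and the factorization genuinely requires the right-multiplication map $\widehat{\rho}_{f(c)}$; I would need the identities governing how $w$, $d$, and $c^id$ multiply $c$ and hence $f(c)$ on the right, presumably recovering $wc=0$, $dc=0$, and $(c^id)c^j$-type relations from the relations defining $R$ together with Lemma \ref{basisofToeplitz}. Once those are in hand, matching coefficients in $\xi f(c)=d$ against the basis yields a linear system whose constant-term equation is inconsistent (echoing Remark \ref{injectiveusesournotation}, where injectivity of $\widehat{\rho}_{f(c)}$ likewise hinged on $f(0)=-1_K$). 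Finally, to conclude non-injectivity I would argue contrapositively: if $Rw$ were injective, then since $\widehat{\rho}_{f(c)}:R\to R$ is a monomorphism by Lemma \ref{lemma:seb}, the map $\chi$ would extend along it, contradicting the non-factorization just established; hence $Rw$ is not injective.
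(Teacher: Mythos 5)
There is a genuine gap, and it sits exactly at the step you flag as your anticipated obstacle: the sidedness of the factorization condition. A factorization $\overline\chi\circ\widehat{\rho}_{f(c)}=\chi$ with $\overline\chi:R\to Rw$ a homomorphism of \emph{left} modules gives, evaluating at $1$,
$\overline\chi(f(c))=\overline\chi(f(c)\cdot 1)=f(c)\cdot\overline\chi(1)=f(c)\xi$, so the equation to be shown unsolvable is $f(c)\xi=d$ with $f(c)$ acting on the \emph{left} of $\xi$ --- not $\xi f(c)=d$ as you write. This is not a cosmetic slip: with your equation the obstruction you are hoping for evaporates. Indeed $wc=0$ and $dc=0$ in $R$, so every $\xi=kw+\bigl(\sum_{i}k_ic^i\bigr)d\in Rw$ satisfies $\xi c=0$, hence $\xi f(c)=f(0)\xi=-\xi$; your equation $\xi f(c)=d$ therefore has the solution $\xi=-d$, and no constant-term or leading-term incompatibility arises. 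Carried out faithfully, your bookkeeping would (incorrectly) suggest that $\chi$ does factor.

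With the corrected equation $f(c)x=d$ the argument goes through, and this is what the paper does: writing $f(c)=-1+h_1c+\cdots+h_mc^m$ with $h_m\neq 0$ and $m\geq 1$, and $x=kw+\sum_{i=0}^nk_ic^id$ via Lemma \ref{basisofRw}, left multiplication of both sides by $w$ gives $-kw=0$, so $k=0$ (here $f(0)=-1_K$ enters, as you predicted); then comparing the coefficient of $c^{m+n}d$ on the two sides of $f(c)\bigl(\sum_{i=0}^nk_ic^id\bigr)=d$ gives $h_mk_n=0$, killing the top coefficient and forcing $x=0$, a contradiction. Note also that this argument uses only $f(0)=-1_K$ and $\deg f\geq 1$, so it establishes non-factorization for \emph{every} $f(x)\in\mathcal F$ as the statement requires, whereas your plan to exhibit a single $f$ would prove non-injectivity but not the full ``in particular'' clause. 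Your reduction of non-injectivity to non-factorization (via injectivity of $\widehat{\rho}_{f(c)}$ from Lemma \ref{lemma:seb}) is correct and matches the paper; only the core computation needs to be redone with the multiplication on the correct side.
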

\begin{proof}
Write $f(c)=-1+h_1c+\cdots+h_mc^m$ with $h_m\not=0$ ($m\geq 1$).  The existence of a map $\xi:R\to Rw$ such that $\xi\circ\widehat{\rho}_{f(c)}=\chi$ is equivalent to the solvability of the equation $f(c)x=d$ in $Rw$.    We show that no such $x\in Rw$ exists.   Assume to the contrary that there is such a solution; necessarily $x\neq 0$.   By Lemma \ref{basisofRw} we may write $x = kw+\sum_{i=0}^n k_ic^id$ for some (unique) $k, k_0, \dots, k_n \in K$, where not all of these are $0$.  Then   $f(c)x=d$ gives 
\[f(c)\big(kw+\sum_{i=0}^n k_ic^id\big)=d.\]
Multiplying  both sides of this equation on the left by $w$ we get $-kw=0$, so $k=0$.   This yields that there are nonzero terms among the elements $k_0, k_1, \dots, k_n$; we may assume $k_n \neq 0$.  Now we have 
\[f(c)\big(\sum_{i=0}^n k_ic^id\big)=d.\]
  But this is impossible, as follows.  Expanding the product $f(c)\big(\sum_{i=0}^n k_ic^id\big)$, we see the coefficient on the $c^{m+n}d$ term is $h_mk_n$.  But the equation  $f(c)\big(\sum_{i=0}^n k_ic^id\big) = d$  gives that the coefficient on the $c^{m+n}d$ term is $0$.   So we get $0 = h_mk_n$  which, as $h_m\neq 0$, gives $k_n = 0$, a contradiction.  
\end{proof}

We seek to describe the injective envelope of $Rw$.  With Proposition \ref{prop:Rwnotinj} in hand, this process will require us to build a module which is strictly larger than $Rw$.
   Intuitively, we build such an injective $R$-module using an approach similar to the one in which the injective envelope of $K[x]$ (as a module over itself) is built:  to wit, the module consisting of formal power series $K[[x]]$.  

\begin{definition}
Let $Y$ denote the $K$-space whose elements are  ``formal series'' of the form
\[ Y := \{k_{-1}w+k_0d+k_1cd+\cdots+k_ic^id+\cdots \ | \ k_i \in K\}.\]
\end{definition}

The $K$-space $Y$ has a natural structure as a left $R$-module, where for $y=k_{-1}w+k_0d+k_1cd+\cdots+k_ic^id+\cdots$ one defines  
$$ c\cdot y =  k_0cd+k_1c^2d+\cdots+k_ic^{i+1}d+\cdots;\ \ \ \   c^*\cdot y=k_1d+\cdots+k_ic^{i-1}d+\cdots; $$
$$d\cdot y 
=  k_{-1}d; \ \ \ \  
d^*\cdot y=k_0w; \ \ \ \  \mbox {and} \ \ v\cdot y=k_1cd+\cdots+k_ic^id+\cdots,\ w\cdot y=k_{-1}w.$$

By Lemma \ref{basisofRw},  $Rw $ is the $R$-submodule of $Y$ consisting of those elements for which $k_i = 0$ for all $i>N$ for some $N \in \N$, i.e., $Rw$ consists of the ``standard polynomials" in $Y$.

\begin{lemma}\label{productwithy}
Let $y = k_{-1}w+k_0d+k_1cd+\cdots+k_ic^id+\cdots \in Y$. 

1) $wy = k_{-1}w$.

2)  $d^*(c^*)^j y = k_jw$ for all $j\geq 0$.

\end{lemma}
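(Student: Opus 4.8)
The plan is to prove both formulas by direct computation using the defining $R$-action on $Y$, since the element $y$ is given explicitly in terms of the $K$-basis $\{w, d, cd, c^2d, \dots\}$ and the action of each of the generators $v, w, c, c^*, d, d^*$ on such a series has been spelled out in the definition of the module structure on $Y$.

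For part (1), I would simply apply the stated rule $w \cdot y = k_{-1}w$ directly. This is immediate from the definition of the module structure on $Y$, so part (1) requires essentially no work beyond citing the defining action.

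For part (2), I would argue by induction on $j \geq 0$, or alternatively give a direct computation exploiting the action of $c^*$. The base case $j = 0$ reads $d^* y = k_0 w$, which is again just the defining rule $d^* \cdot y = k_0 w$. For the inductive step, the key observation is that $c^* \cdot y = k_1 d + k_2 cd + \cdots + k_i c^{i-1}d + \cdots$, that is, $c^*$ acts as a ``shift'' which deletes the $w$ and $d$ terms and relabels the coefficients so that the coefficient of $c^{i}d$ in $c^* \cdot y$ is $k_{i+1}$. Iterating, $(c^*)^j \cdot y$ is the series whose coefficient of $c^{i}d$ is $k_{i+j}$ (with the $w$-coefficient vanishing for $j \geq 1$); applying part (2) in the base case $j=0$ to this shifted series then yields $d^* (c^*)^j y = k_j w$. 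I would state this cleanly as: $(c^*)^j \cdot y$ has the form $k_j d + k_{j+1}cd + \cdots$, and then $d^*$ picks off the leading $d$-coefficient $k_j$ to produce $k_j w$.

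The only mildly delicate point, and the one I expect to be the main (though minor) obstacle, is making the bookkeeping of the index shift under repeated application of $c^*$ precise, i.e.\ verifying that $(c^*)^j \cdot y$ has coefficient $k_{i+j}$ on $c^i d$ for each $i \geq 0$. This is a routine induction on $j$ using the single defining rule for $c^* \cdot y$, and once it is in place the formula $d^*(c^*)^j y = k_j w$ follows at once by applying $d^*$. I would therefore organize the proof as: (1) quote the action of $w$; (2) establish the shift formula for $(c^*)^j$ by induction and conclude by applying $d^*$.
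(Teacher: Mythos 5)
Your proposal is correct and is essentially the same direct computation the paper gives: the paper's one-line proof just evaluates $d^*(c^*)^j c^i d$ (which is $w$ if $i=j$ and $0$ otherwise) termwise, whereas you track the shift of the whole series under repeated application of $c^*$ and then apply $d^*$ — the same bookkeeping organized slightly differently. Both rest solely on the defining action of the generators on $Y$, so no further comment is needed.
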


\begin{proof}  (1) is obvious, and (2) follows directly from the observation that   $d^*(c^*)^j c^id = w$ if $i=j$, and is $0$ otherwise.
\end{proof}

\begin{lemma}\label{RwessentialinY}
The simple module $Rw$ is essential in $Y$.   In particular,  $Rw = {\rm Soc}(Y)$.  
\end{lemma}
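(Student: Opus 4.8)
The plan is to exploit the fact that $Rw$ is \emph{simple}, which collapses the essentiality condition to a single concrete statement. Since $Rw$ is a simple module, a submodule $N$ of $Y$ meets $Rw$ nontrivially (i.e.\ $N\cap Rw\neq 0$) precisely when $Rw\subseteq N$. Hence $Rw$ is essential in $Y$ if and only if every nonzero $y\in Y$ generates a cyclic submodule $Ry$ with $w\in Ry$. So the entire argument reduces to: given a nonzero formal series, produce $w$ (up to a nonzero scalar) as an $R$-multiple of it.

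First I would take an arbitrary nonzero $y=k_{-1}w+k_0d+k_1cd+\cdots+k_ic^id+\cdots\in Y$ and pick an index $j\geq -1$ with $k_j\neq 0$. Then Lemma~\ref{productwithy} does all the work: if $j=-1$ then $wy=k_{-1}w\neq 0$, while if $j\geq 0$ then $d^*(c^*)^jy=k_jw\neq 0$. In either case an appropriate element of $R$ sends $y$ to a nonzero scalar multiple of $w$, so $w\in Ry$. Consequently, for any nonzero submodule $N$ of $Y$, choosing a nonzero $y\in N$ gives $w\in Ry\subseteq N$, so $w\in N\cap Rw$; this establishes that $Rw$ is essential in $Y$.

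For the socle claim I would argue that the essentiality just proved forces $Rw$ to be the \emph{only} simple submodule of $Y$. On the one hand $Rw$ is itself simple, so $Rw\subseteq{\rm Soc}(Y)$. On the other hand, if $S\subseteq Y$ is any simple submodule, then by essentiality $S\cap Rw\neq 0$; since both $S$ and $Rw$ are simple, this intersection equals each of them, whence $S=Rw$. As ${\rm Soc}(Y)$ is the sum of all simple submodules of $Y$, it follows that ${\rm Soc}(Y)=Rw$.

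I expect no serious obstacle here: once the reduction to the pointwise condition is recorded, the explicit action formulas of Lemma~\ref{productwithy} produce $w$ immediately, and the socle statement is a formal consequence of essentiality together with the simplicity of $Rw$. The only point demanding a moment's care is that very first reduction, namely that for a \emph{simple} submodule the generic definition of essentiality ($N\cap Rw\neq 0$ for every nonzero $N$) is equivalent to the containment $Rw\subseteq Ry$ for each nonzero $y$; this is precisely where simplicity of $Rw$ is used.
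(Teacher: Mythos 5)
Your proposal is correct and follows essentially the same route as the paper: take a nonzero formal series, locate a nonzero coefficient $k_\ell$, and apply Lemma~\ref{productwithy} (multiplying by $w$ if $\ell=-1$ and by $d^*(c^*)^\ell$ if $\ell\geq 0$) to land a nonzero scalar multiple of $w$ inside the cyclic submodule. Your additional remarks, the reduction via simplicity of $Rw$ and the derivation of ${\rm Soc}(Y)=Rw$ from essentiality, are correct standard bookkeeping that the paper leaves implicit.
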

\begin{proof}
Consider an element $0\not=y=k_{-1}w+k_0d+k_1cd+\cdots+k_ic^id+\cdots\in Y$. There exists $\ell\in \mathbb{Z}^+ \cup \{-1\}$ such that $k_\ell \not=0$.   If $k_{-1} \neq 0$,  then by Lemma \ref{productwithy}(1) $wy = k_{-1}w \neq 0$ is in $ Rw$.  If $k_\ell \neq 0$ for $\ell \geq 0$,  then by Lemma \ref{productwithy}(2) $d^*(c^*)^{\ell} y=k_{\ell}w \neq 0$ 
is in $Rw$.
\end{proof}

\begin{lemma}\label{extendfromJ}
Any $R$-homomorphism from $J=Rw\oplus Rd^*\oplus Rd^*c^*\oplus Rd^*(c^*)^2\oplus\cdots$ to $Y$ extends to an $R$-homomorphism from $R$ to $Y$.
\end{lemma}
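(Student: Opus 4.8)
The plan is to exploit the fact that $R$ is unital, so that any extension $\overline\varphi\colon R \to Y$ of a given $\varphi\colon J \to Y$ is completely determined by the single element $y_0 := \overline\varphi(1) \in Y$, via $\overline\varphi(r) = r\,y_0$. Conversely, to \emph{produce} an extension it suffices to exhibit some $y_0 \in Y$ with $a\,y_0 = \varphi(a)$ for every $a \in J$, since one may then simply define $\overline\varphi(r) = r\,y_0$. Because $J = Rw \oplus \bigoplus_{j\geq 0} Rd^*(c^*)^j$ is a direct sum of cyclic left ideals with generators $w$ and $d^*(c^*)^j$ ($j\geq 0$), and because both $\varphi$ and left multiplication by $y_0$ are $R$-linear, it is enough to arrange that $y_0$ satisfies $w\,y_0 = \varphi(w)$ and $d^*(c^*)^j y_0 = \varphi(d^*(c^*)^j)$ for all $j\geq 0$; every element of $J$ is a finite $R$-linear combination of these generators, so agreement on generators forces agreement on all of $J$.

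Next I would show that each prescribed value already lies in the one-dimensional subspace $Kw \subseteq Y$. This is where the idempotent $w$ does the work: from the defining relations one has $w^2 = w$ and $w\,d^*(c^*)^j = d^*(c^*)^j$ (the latter because $d = dw$, hence $d^* = w\,d^*$). Using $R$-linearity of $\varphi$ we then get $\varphi(w) = \varphi(w^2) = w\,\varphi(w)$ and $\varphi(d^*(c^*)^j) = \varphi(w\,d^*(c^*)^j) = w\,\varphi(d^*(c^*)^j)$; by Lemma \ref{productwithy}(1) we have $w\,Y = Kw$, so each of $\varphi(w)$ and $\varphi(d^*(c^*)^j)$ is a scalar multiple of $w$. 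I record scalars $\ell_{-1}, \ell_0, \ell_1, \dots \in K$ by writing $\varphi(w) = \ell_{-1} w$ and $\varphi(d^*(c^*)^j) = \ell_j w$ for $j\geq 0$.

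With these scalars in hand I would define the candidate element as the formal series
\[ y_0 := \ell_{-1} w + \sum_{j\geq 0} \ell_j\, c^j d \ \in\ Y, \]
and verify directly that it works. By Lemma \ref{productwithy}(1), $w\,y_0 = \ell_{-1} w = \varphi(w)$, and by Lemma \ref{productwithy}(2), $d^*(c^*)^j y_0 = \ell_j w = \varphi(d^*(c^*)^j)$ for every $j\geq 0$. Thus $\overline\varphi\colon R \to Y$, $r \mapsto r\,y_0$, is an $R$-homomorphism agreeing with $\varphi$ on each generator of $J$, and therefore $\overline\varphi|_J = \varphi$, which gives the desired extension.

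The point to emphasize --- and the only place where anything genuinely nontrivial happens --- is the well-definedness of $y_0$ as an element of $Y$. The list of scalars $(\ell_j)_{j\geq 0}$ is a priori arbitrary and may have infinitely many nonzero entries, so $y_0$ must be allowed to be a formal series rather than a finite combination. This is exactly the feature that $Y$ possesses and $Rw$ (whose elements are finitely supported) lacks, and it is entirely consistent with $Rw$ itself failing to be injective (Proposition \ref{prop:Rwnotinj}). No delicate computation is required: once the generator values are seen to land in $Kw$ via the idempotent $w$, and Lemma \ref{productwithy} is invoked, the extension assembles itself, with the size of $Y$ absorbing the potentially infinite amount of data carried by $\varphi$.
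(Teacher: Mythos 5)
Your proposal is correct and follows essentially the same route as the paper: use the idempotent $w$ together with Lemma \ref{productwithy} to see that $\varphi(w)$ and each $\varphi(d^*(c^*)^j)$ lie in $Kw$, then assemble the recorded scalars into a single formal series in $Y$ whose left multiples recover $\varphi$ on the generators. The only difference is expository (you spell out why agreement on the cyclic generators suffices), so nothing further is needed.
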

\begin{proof}
Let $\varphi:J\to Y$ be a homomorphism of left $R$-modules.    For each $i \geq 0$ let $k_i$ denote the $K$-coefficient of $w$ in the formal power series expression for $\varphi(d^*(c^*)^i)$, and let $k_{-1}$ be the $K$-coefficient of $w$ in $\varphi(w)$.   Since $\varphi(w) = \varphi(w^2) = w\varphi(w)$ and $\varphi(d^*(c^*)^i)= \varphi(wd^*(c^*)^i) = w \varphi(d^*(c^*)^i)$, Lemma \ref{productwithy} gives that $\varphi(w) = k_{-1}w$ and $\varphi(d^*(c^*)^i) = k_iw$ for all $i\geq 0$. 

Now consider the $R$-homomorphism $ \Phi:R\to Y$   obtained by  setting
\[ \Phi(1): =  k_{-1}w+k_0d+k_1cd+k_2c^2d+\cdots \ . \]
Since $\Phi(w)=w\Phi(1)=k_{-1}w$ and $\Phi(d^*(c^*)^i)=d^*(c^*)^i\Phi(1)=k_iw$ for each $i\geq 0$ (again by Lemma \ref{productwithy}),  $\Phi$ extends $\varphi$.
\end{proof}

It is well-known that in the ring of formal power series  $K[[x]]$, the invertible elements are precisely those formal power series $\gamma(x) = \sum_{i=0}^\infty k_i x^i$ for which $k_0 \neq 0$;  i.e., for which $\gamma(0)\neq 0$.

\begin{lemma}\label{extendfromRp(c)}
Let $p(x)=p_0+p_1x+\cdots+p_nx^n \in \mathcal P$.   Any $R$-homomorphism from $Rp(c)$ to $Y$ extends to an $R$-homomorphism from $R$ to $Y$.
\end{lemma}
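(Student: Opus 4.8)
The plan is to reduce the whole question to the extension property for $J$ that is already in hand from Lemma~\ref{extendfromJ}. The crucial structural input is that, by Remark~\ref{radicalproperties}, the socle $J$ is contained in $Rp(c)$ for every $p(x)\in\mathcal P$: indeed $wp(c)=p_0w$ with $p_0\neq 0$ forces $w\in Rp(c)$, and $J$ is the two-sided ideal generated by $w$. Granting $J\subseteq Rp(c)$, any given $\varphi\colon Rp(c)\to Y$ may be restricted to $J$, and by Lemma~\ref{extendfromJ} the restriction $\varphi|_J$ extends to some $\Phi\colon R\to Y$. Setting $\psi:=\varphi-\Phi|_{Rp(c)}$, we obtain an $R$-homomorphism $\psi\colon Rp(c)\to Y$ with $\psi|_J=0$, and it then suffices to prove $\psi=0$: for then $\varphi=\Phi|_{Rp(c)}$, so $\Phi$ is the desired extension of $\varphi$ to all of $R$.

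Since $Rp(c)$ is generated as a left ideal by $p(c)$, proving $\psi=0$ amounts to proving $\psi(p(c))=0$. Write $y_0:=\psi(p(c))=k_{-1}w+\sum_{i\geq 0}k_ic^id\in Y$. The key computation is that left-multiplying $p(c)$ by the socle elements $w$ and $d^*(c^*)^j$ keeps us inside $J$: one has $wp(c)=p_0w\in J$ (using $w\cdot 1_R=w$ and $wc^i=0$ for $i\geq 1$), and, because $J$ is a two-sided ideal containing each $d^*(c^*)^j$, also $d^*(c^*)^jp(c)\in J$ for every $j\geq 0$. Applying $\psi$ and using $\psi|_J=0$ together with $R$-linearity gives $wy_0=\psi(wp(c))=0$ and $d^*(c^*)^jy_0=\psi(d^*(c^*)^jp(c))=0$. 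By Lemma~\ref{productwithy} these say precisely $k_{-1}w=0$ and $k_jw=0$ for all $j\geq 0$, so every coefficient of $y_0$ vanishes and $y_0=0$. Hence $\psi(p(c))=0$ and $\psi=0$, completing the extension.

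The step I expect to be the genuine crux is exactly this vanishing of $\psi$. A priori $\psi$ factors through the nontrivial quotient $Rp(c)/J$, which under $R/J\cong K[x,x^{-1}]$ corresponds to the principal ideal generated by the image of $p$; one might therefore fear that this ``Laurent part'' obstructs the extension. The content of the argument is that it cannot: because $Y$ has simple essential socle equal to $Rw$ (Lemma~\ref{RwessentialinY}), the only effect the socle elements $d^*(c^*)^j$ can have on $\psi(p(c))$ is to read off its individual coefficients into $Rw$, and those must all vanish since $d^*(c^*)^jp(c)$ already lies in $J$, where $\psi$ is zero. The concrete facts on which everything rests are the containment $J\subseteq Rp(c)$ and the two product identities $wp(c)=p_0w$ and $d^*(c^*)^jp(c)\in J$, all of which are routine once the relations in $R$ and the two-sidedness of $J$ are invoked.
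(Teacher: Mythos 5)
Your overall strategy is sound and genuinely different from the paper's. The paper proves this lemma directly: since $p_0=p(0)\neq 0$, the polynomial $p(x)$ is invertible in $K[[x]]$, and writing $p(x)^{-1}=\sum_i a_ix^i$ one builds an explicit element $z\in Y$ by coefficientwise convolution (informally, $z=\alpha(c)\cdot y$) satisfying $p(c)z=y$, and then sets $\Psi(1)=z$. Your route instead piggybacks on Lemma \ref{extendfromJ}: restrict to $J$, extend, subtract, and show the difference $\psi$ vanishes because the socle elements $w$ and $d^*(c^*)^j$ both land in $J$ after right multiplication by $p(c)$ and, by Lemma \ref{productwithy}, read off every coefficient of $\psi(p(c))$. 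This is an attractive argument: it avoids the power-series bookkeeping entirely and makes conceptually transparent why the ``Laurent part'' of $Rp(c)/J$ creates no obstruction. The price is that it leans on one extra structural fact, discussed next.

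The one genuine gap is your justification of $J\subseteq Rp(c)$. You argue that $wp(c)=p_0w$ forces $w\in Rp(c)$ (correct) and then conclude because ``$J$ is the two-sided ideal generated by $w$.'' That inference is invalid as stated: $Rp(c)$ is a priori only a \emph{left} ideal, and a left ideal containing $w$ need not contain $RwR$. Since as a left ideal $J=Rw\oplus(\oplus_{i\geq 0}Rd^*(c^*)^i)$, you must also check that each $d^*(c^*)^i$ lies in $Rp(c)$. This is true and easy to repair: using $d^*c=0$ and $(c^*)^ic^j=(c^*)^{i-j}$ for $j\leq i$, one computes $d^*(c^*)^ip(c)=\sum_{l=0}^{\min(i,n)}p_l\,d^*(c^*)^{i-l}$, a triangular system whose diagonal coefficient $p_0$ is invertible, so induction on $i$ gives $d^*(c^*)^i\in Rp(c)$ for all $i\geq 0$. (Alternatively, the containment $J\subseteq Rp(c)$ for $p(x)\in\mathcal P$ is implicit in Remark \ref{radicalproperties}, where the left ideals containing $J$ are identified with the ideals $Rp(c)$; citing that would also suffice.) With this containment secured, the remainder of your argument --- the well-definedness of $\psi$, its vanishing on $J$, and the coefficient extraction via Lemma \ref{productwithy} together with the two-sidedness of $J$ --- is correct.
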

\begin{proof}
Let $\psi: Rp(c)\to Y$ be a homomorphism of left $R$-modules.  Let $\psi(p(c))=y$, where $y=k_{-1}w+k_0d+k_1cd+\cdots+k_ic^id+\cdots$. We need to find an $R$-homomorphism $\Psi:R\to Y$ such that $$\Psi(p(c))=p(c)\Psi(1)=y.$$ 

Because $p_0 \neq 0$, viewing $p(x) \in K[[x]]$ there exists $\alpha(x) \in K[[x]]$ for which $p(x)\alpha(x) = 1$ in $K[[x]]$.   Write $\alpha(x) = a_0 + a_1x + a_2x^2 + \cdots.$ Set $p(x)=\sum_{i=0}^\infty p_ix^i$, with $p_i=0$ $\forall i>n$; then 
$$p_0a_0 = 1, \ \ \mbox{and} \ \ \  \ \sum_{j=0}^N p_ja_{N-j} = 0 \ \ \mbox{for all} \ N\geq 1 \ . \quad\quad\qquad  (\star)$$

\noindent Now define the following elements of $K$:
$$z_{-1} := a_0k_{-1}, \ \ \mbox{and, for each } M\geq 0,     \ z_M := \sum_{i=0}^M a_i k_{M-i}.$$

\noindent We construct  $z \in Y$ by setting
$$z := z_{-1}w + z_0d + z_1cd + z_2c^2d + \cdots ,$$
\noindent so that $p(c)z = (p_0 1_R + p_1c + p_2c^2 + \cdots + p_nc^n) (z_{-1}w + z_0d + z_1cd + z_2c^2d + \cdots)$. We already know that $p_0a_0 = 1$, so that $p_0z_{-1}=p_0a_0k_{-1}=k_{-1}$. Moreover, by standard computations and using the previous relations $(\star)$, one can show that for any $i\geq 0$, the coefficient of the term $c^id$ in  $p(c)z$  equals the coefficient of the term $c^id$ in $y$. This gives  that 
$p(c)z = y$ in  $Y$.   (Intuitively,  the  idea here  is to  ``define informally"  the expression $\alpha(c) =  a_01_R + a_1c + a_2c^2 + \cdots$, and subsequently the element  $z\in Y$ as $z = \alpha(c)y$, so that  $p(c)\alpha(c)y=1\cdot y=y$.)

Finally,  consider the $R$-homomorphism $\Psi:R\to Y$ obtained by setting  
$$\Psi(1)=z.$$ 
Then  
$\Psi(p(c))=p(c)\Psi(1)=p(c)z=y$, as desired.
\end{proof}


\begin{proposition}\label{Yinjective}
The left $L_K(\mathcal{T})$-module $Y$ is injective.
\end{proposition}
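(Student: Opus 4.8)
The plan is to verify injectivity of $Y$ by applying Baer's criterion in the streamlined form provided by Corollary~\ref{simplifyBaer}. That corollary tells us that to establish injectivity of the left $R$-module $Y$, it suffices to check the extension property with respect to just two families of left ideals: the socle $J$, and the left ideals of the form $Rp(c)$ for $p(x)\in\mathcal{P}$. The key observation is that essentially all the work has already been done in the two preceding lemmas.

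Concretely, I would proceed as follows. First, by Corollary~\ref{simplifyBaer} it is enough to show that every $R$-homomorphism $\varphi\colon I\to Y$ extends to an $R$-homomorphism $R\to Y$, where $I$ ranges over $J$ and the ideals $Rp(c)$ with $p(x)\in\mathcal{P}$. The case $I=J$ is precisely the content of Lemma~\ref{extendfromJ}, which guarantees that any $R$-homomorphism from $J$ to $Y$ extends to $R$. The case $I=Rp(c)$ for $p(x)\in\mathcal{P}$ is precisely the content of Lemma~\ref{extendfromRp(c)}, which provides the required extension by constructing the element $z\in Y$ playing the role of $\alpha(c)y$. Since both families of left ideals are covered, the Baer criterion (in the reduced form of Corollary~\ref{simplifyBaer}) is satisfied, and hence $Y$ is injective.

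In effect, the proof is a two-line assembly: invoke Corollary~\ref{simplifyBaer} to reduce to the two cases, then cite Lemma~\ref{extendfromJ} and Lemma~\ref{extendfromRp(c)} respectively. The genuine mathematical obstacles have already been surmounted in those lemmas, so there is no further calculation to grind through here. The main conceptual point worth emphasizing is \emph{why} the reduction of Corollary~\ref{simplifyBaer} applies: it rests on Proposition~\ref{structureofleftideals}, which classifies every left ideal of $R$ as either a direct summand of some $Rp(c)$ or a direct summand of $J$, together with the elementary fact that the extension property for a left ideal passes to its direct summands (so it suffices to handle the "ambient" ideals $Rp(c)$ and $J$ themselves). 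I would make sure this point is at least gestured at, since it is the structural engine that makes the whole argument go through; the two lemmas then dispatch the two surviving cases cleanly.
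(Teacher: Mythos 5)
Your proposal matches the paper's proof exactly: invoke Corollary~\ref{simplifyBaer} to reduce Baer's criterion to the two families of left ideals, then dispatch them with Lemma~\ref{extendfromJ} and Lemma~\ref{extendfromRp(c)}. The additional remark on why the reduction to $J$ and $Rp(c)$ is legitimate (via Proposition~\ref{structureofleftideals} and passage to direct summands) is a correct and welcome clarification, but the argument is the same as the paper's.
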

\begin{proof}
We again invoke Corollary \ref{simplifyBaer}, which yields that we need only test the injectivity of $Y$ with respect to the two indicated types of left $R$-ideals.  But this is precisely what has been achieved in Lemmas \ref{extendfromJ} and \ref{extendfromRp(c)}. 
\end{proof}

\begin{corollary}\label{YisenvelopeofRw}
$Y$ is the injective envelope of $Rw$.
\end{corollary}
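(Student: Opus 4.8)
The plan is to deduce Corollary \ref{YisenvelopeofRw} directly from the two facts already established about $Y$, exactly as was done for $U^f$ in Corollary \ref{UfisenvelopeofVf}. Recall that the injective envelope of a module $M$ is characterized as any injective module in which $M$ sits as an essential submodule; such an object exists and is unique up to isomorphism. So I would only need to assemble two ingredients: that $Y$ is injective, and that $Rw$ is essential in $Y$.

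First I would invoke Proposition \ref{Yinjective}, which tells us that the left $R$-module $Y$ is injective. Second, I would invoke Lemma \ref{RwessentialinY}, which establishes precisely that the simple module $Rw$ is an essential submodule of $Y$ (indeed $Rw = \operatorname{Soc}(Y)$). Combining these two, $Y$ is an injective module containing $Rw$ as an essential submodule, and hence $Y$ is \emph{the} injective envelope of $Rw$.

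Since both required inputs are already proven earlier in the section, there is essentially no obstacle here: the corollary is a one-line consequence of Proposition \ref{Yinjective} together with Lemma \ref{RwessentialinY}, mirroring verbatim the structure of the proof of Corollary \ref{UfisenvelopeofVf}. The only point worth a moment's care is to note explicitly that essentiality plus injectivity does characterize the injective envelope (up to isomorphism), so that no further verification of minimality is needed.

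\begin{proof}
By Proposition \ref{Yinjective}, the left $R$-module $Y$ is injective, and by Lemma \ref{RwessentialinY} the simple module $Rw$ is essential in $Y$. But the injective envelope of any module is an injective module in which the given module sits as an essential submodule. Hence $Y$ is the injective envelope of $Rw$.
\end{proof}
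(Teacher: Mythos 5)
Your proof is correct and follows exactly the same route as the paper: it combines Proposition \ref{Yinjective} (injectivity of $Y$) with Lemma \ref{RwessentialinY} (essentiality of $Rw$ in $Y$) and the standard characterization of the injective envelope. Nothing further is needed.
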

\begin{proof}
As noted in Corollary \ref{UfisenvelopeofVf},  the injective envelope of any module is an injective module in which the given module sits as an essential submodule.   So the result follows from Lemma \ref{RwessentialinY} and Proposition \ref{Yinjective}.  
\end{proof}

We have the following description of the quotient $Y/Rw$, namely, that it is  an extension of a direct summand of a product of copies of the $U^f$'s by the simple module $Rw$.
\begin{proposition}
The module $Y/{Rw}$  is a direct summand of a product of copies of the $U^f$'s. 
\end{proposition}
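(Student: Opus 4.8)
The plan is to reduce the statement to the module theory of the commutative PID $S := R/J \cong K[x,x^{-1}]$ (Remark \ref{radicalproperties}), over which both $Y/Rw$ and every $U^f$ naturally live, and then to split an embedding of $Y/Rw$ into a product of $U^f$'s using injectivity.

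First I would check that $J = \Soc(R)$ annihilates $Y/Rw$, so that $Y/Rw$ is an $S$-module. Writing $J = Rw \oplus \bigoplus_{j\geq 0} Rd^*(c^*)^j$ as in Remark \ref{radicalproperties}, Lemma \ref{productwithy} shows that $w$ and every $d^*(c^*)^j$ carry each element of $Y$ into $Rw$; hence $J\cdot(Y/Rw)=0$. Each $U^f$ is an $S$-module as well: since $Rf(c)=\Ann_R(V^f)$ is two-sided (Lemma \ref{lemma:seb}(2)) one gets $f(c)R\subseteq Rf(c)$, and inductively $Rf^n(c)$ is two-sided; as $w\in Rf^n(c)$ this forces $J\subseteq Rf^n(c)$, so $J$ annihilates every $M_n^f=R/Rf^n(c)$ and hence $U^f$. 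Under $S\cong K[x,x^{-1}]$ (with $c\mapsto x$, and $f(c)\mapsto f(x)$ since $f(0)=-1$ kills the $w$-term) we have $M_n^f\cong S/(f^n)$ and $U^f\cong \varinjlim S/(f^n)$, the Pr\"ufer $S$-module, i.e.\ the injective envelope over $S$ of the simple module $S/(f)$. Finally $Y/Rw$ is injective: $R$ is hereditary (Corollary \ref{quotientsofinjectives}) and $Y$ is injective (Proposition \ref{Yinjective}), so the quotient $Y/Rw$ is an injective $R$-module, and being an $S$-module it is then injective over $S$ too, because every $S$-submodule inclusion is an $R$-submodule inclusion.

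With this reduction in place, the strategy is to embed $Y/Rw$ into a product of copies of the $U^f$ and then split the embedding. For the embedding it suffices that the family $\{U^f\mid f\in\mathcal F\}$ cogenerate $Y/Rw$ over $S$: for every $0\neq \bar y\in Y/Rw$ I must produce $f\in\mathcal F$ and $\phi:Y/Rw\to U^f$ with $\phi(\bar y)\neq 0$. Consider the cyclic submodule $S\bar y\cong S/\Ann_S(\bar y)$. If $\bar y$ is torsion, say $\Ann_S(\bar y)=(g)$ with $g\neq 0$, choose an irreducible factor $f$ of $g$ and send $\bar y$ to a generator of the $f^k$-torsion submodule $S/(f^k)\subseteq U^f$, where $f^k$ is the exact power of $f$ dividing $g$; this is a well-defined nonzero map $S\bar y\to U^f$ (the complementary cofactor of $g$ acts invertibly on the $f$-primary module $U^f$). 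If $\bar y$ is torsion-free then $S\bar y\cong S$ and I may send $\bar y$ to any nonzero element of any $U^f$. In either case $U^f$ is $S$-injective, so the map $S\bar y\to U^f$ extends to $\phi:Y/Rw\to U^f$ with $\phi(\bar y)\neq 0$. Collecting these $\phi$ yields an $S$-monomorphism (equivalently an $R$-monomorphism) $Y/Rw\hookrightarrow \prod_\lambda U^{f_\lambda}$. Since $Y/Rw$ is injective over $R$, this monomorphism splits over $R$, exhibiting $Y/Rw$ as a direct summand of $\prod_\lambda U^{f_\lambda}$.

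The step I expect to require the most care is the cogeneration claim for the \emph{torsion-free} part of $Y/Rw$: one must be sure that the torsion Pr\"ufer modules $U^f$ alone suffice to separate torsion-free elements. They do, because each $U^f$ is a faithful $S$-module, so any nonzero element admits a nonzero image; this is exactly why the argument above treats the torsion-free case without referring to $K(x)$. Equivalently, by Matlis' structure theory over the PID $S$ one may write $Y/Rw\cong K(x)^{(A)}\oplus\bigoplus_{f} (U^f)^{(B_f)}$ (with $K(x)=\mathrm{Frac}(S)$), and the only nonformal point is that the torsion-free summand $K(x)$ is itself a direct summand of a product of the $U^f$, via the surjection $K(x)\to K(x)/S\cong\bigoplus_{f} U^f$ together with faithfulness.
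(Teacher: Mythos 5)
Your proof is correct, but it takes a genuinely different route from the paper's. The paper stays entirely inside the category of left $R$-modules: it first shows $\Hom_R(Rw,\,Y/Rw)=0$ (using that $Rw$ is projective and is the essential socle of $Y$), then for each nonzero $x\in Y/Rw$ chooses a maximal left ideal $M\supseteq \Ann_R(x)$, rules out $R/M\cong Rw$, concludes $R/M\cong V^f\hookrightarrow U^f$ for some $f$, and extends $Rx\to U^f$ to all of $Y/Rw$ by injectivity of $U^f$; the resulting embedding into a product then splits because $Y/Rw$ is injective ($Y$ injective together with Corollary \ref{quotientsofinjectives}). You instead transfer the entire problem to the PID $R/J\cong K[x,x^{-1}]$, after checking that $J$ annihilates both $Y/Rw$ (via Lemma \ref{productwithy}) and each $U^f$ (since $w\in Rf^n(c)$ and $Rf^n(c)$ is two-sided), and then run the classical cogeneration argument with Pr\"ufer modules over a PID. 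All the delicate steps in your reduction are sound: the transfer of injectivity from $R$ to $R/J$ goes in the correct (non-trivial but true) direction, and your observation that $J\cdot(Y/Rw)=0$ actually subsumes the paper's first step, since $\Hom_R(Rw,N)\cong wN$ for any $N$. What your route buys is strictly more information: via Matlis theory over $K[x,x^{-1}]$ you obtain an explicit decomposition of $Y/Rw$ into copies of $K(x)$ and of the various $U^f$, rather than only the summand-of-a-product statement. What the paper's route buys is economy and uniformity with the rest of the article: it requires no identification of which modules are $R/J$-modules and no appeal to commutative structure theory, only the already-established facts about $Rw$, the $V^f$, the injectivity of the $U^f$, and hereditariness of $R$.
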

\begin{proof}
Consider the short exact sequence $0\to Rw\to Y\to Y/{Rw}\to 0$. First notice that $\Hom(Rw, Y/{Rw})=0$, as follows.  To the contrary, suppose there exists   $0\neq f: Rw\to Y/{Rw}$. Then by the simplicity of $Rw$, the map $f$ must be a monomorphism.  Further, since $Rw$ is projective, there then exists $\tilde{f}:Rw\to Y$ such that $\pi\circ \tilde{f}=f$. In particular $\Im \tilde{f}\cap Rw=0$.  But this is a contradiction since $Rw$ is the essential socle of $Y$.

Now let $0\neq x\in Y/{Rw}$, and consider the cyclic module $Rx\cong R/{\Ann (x)}$. Let $M$ be a maximal  left ideal of $R$ containing $\Ann (x)$, so that $Rx\to  {R/M}\to  0$. If $R/M\cong Rw$,  since $Rw$ is projective   we would get that $Rw$ is a summand of $Rx$, in particular is a submodule of $Rx$ and thereby  also of $Y/{Rw}$, contrary to the result of the previous paragraph. So $R/M$ is a simple module of type $V^f$, and hence it embeds in $U^f$. In such a way, for any $0\neq x\in Y/{Rw}$, there is a suitable $f(x)\in \mathcal{F}$ and  a  morphism $\varphi_x:Rx\to  U^f$, such that $\varphi_x(x)\neq 0$. Since $U^f$ is injective, $\varphi_x$ extends to a morphism $\tilde{\varphi_x}: Y/{Rw}\to  U^f$. 
 So we get that  $Y/{Rw}$ embeds in a product of copies of the $U^f$ ($f(x)\in\mathcal F$).   But $Y$ is injective, and so by Corollary~\ref{quotientsofinjectives}  also $Y/Rw$ is injective, and thus $Y/Rw$ indeed is  a direct summand of the product of copies of the $U^f$'s.    \end{proof}

\bigskip

\subsection{Consequences of Subsections \ref{subsectionV^f} and \ref{subsectionRw}}

Every ring admits (up to isomorphism) a unique minimal injective cogenerator (see e.g. \cite[Section 18]{AF} for a full description of this concept). Since any  representation of the ring  embeds in a product of copies of a cogenerator, once we know such a cogenerator we can  describe the entire category of modules over the ring. Thanks to previous  results we are able to determine a minimal injective cogenerator for the  algebra $L_K(\mathcal{T})$.

\begin{theorem}\label{Yismininjcogen}
The $L_K(\mathcal{T})$-module $$C = Y \oplus (\oplus_{f(x) \in \mathcal{F}} U^f ) $$ is a minimal injective cogenerator for  $L_K(\mathcal{T})$.
\end{theorem}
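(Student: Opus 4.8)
The plan is to prove that $C$ is a minimal injective cogenerator by verifying the three defining properties in turn: injectivity, the cogenerator property, and minimality. The minimal injective cogenerator of any ring is isomorphic to the direct sum (one copy each) of the injective envelopes of the isomorphism classes of simple modules, so the cleanest route is to identify $C$ with exactly this direct sum and then cite the standard characterization.

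First I would establish injectivity of $C$. By the results of Subsection \ref{subsectionV^f}, each $U^f$ is injective and is the injective envelope of $V^f$ (Corollary \ref{UfisenvelopeofVf}), and by Proposition \ref{prop:U} any direct sum $\oplus_{f(x)\in\mathcal{F}} U^f$ is injective. By Proposition \ref{Yinjective}, $Y$ is injective, and $Y$ is the injective envelope of $Rw$ (Corollary \ref{YisenvelopeofRw}). Thus $C$ is a direct sum of two injective modules; to conclude that $C$ itself is injective I would invoke Proposition \ref{prop:U} together with the injectivity of $Y$. A slicker argument avoids even this: since $R=L_K(\mathcal{T})$ is hereditary (Corollary \ref{quotientsofinjectives}), and injective envelopes of simples are the building blocks, one checks Baer's criterion directly on $C$ using Corollary \ref{simplifyBaer}: any map from $J$ into $C$ lands in $Y$ by the socle argument of Proposition \ref{prop:U} (Step 1) combined with Lemma \ref{extendfromJ}, and any map from $Rp(c)$ has finitely generated image, hence lands in $Y\oplus(\oplus_{i=1}^n U^{f_i})$, which is a finite direct sum of injectives and therefore injective, so the map extends.

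Next I would verify the cogenerator property, namely that every simple left $R$-module embeds in $C$. As recalled in the excerpt, by \cite[Corollary 4.6]{AR} the complete list of simple left $R$-modules up to isomorphism is $\{Rw\}\cup\{V^f \mid f(x)\in\mathcal{F}\}$. Now $Rw$ is the essential socle of $Y$ (Lemma \ref{RwessentialinY}), so $Rw$ embeds in the summand $Y$ of $C$; and each $V^f$ is the essential socle of $U^f$ (Corollary \ref{UfisenvelopeofVf}), so $V^f$ embeds in the corresponding summand $U^f$ of $C$. Since $C$ is an injective module containing a copy of the injective envelope of each simple, every module embeds in a product of copies of $C$; concretely, $C$ contains $\bigoplus_S E(S)$ where $S$ ranges over the isomorphism classes of simples, which is the standard criterion for being a cogenerator in a module category. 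I would phrase this via the fact that an injective module is a cogenerator precisely when it contains a copy of each indecomposable injective envelope of a simple module.

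Finally, for minimality, I would observe that $C=Y\oplus(\oplus_{f}U^f)$ is exactly the direct sum, with multiplicity one, of the injective envelopes $E(Rw)=Y$ and $E(V^f)=U^f$ over a complete irredundant set of simples; this is the textbook description of the minimal injective cogenerator (see \cite[Section 18]{AF}). To make the irredundancy explicit, one notes the summands are pairwise non-isomorphic indecomposable injectives, since they have pairwise non-isomorphic simple socles ($Rw\not\cong V^f$ for all $f$, and $V^f\not\cong V^{f'}$ for $f\neq f'$ in $\mathcal{F}$, as these have distinct annihilators $Rf(c)$ by Lemma \ref{lemma:seb}(2)). The main obstacle to watch is ensuring there is no redundancy and no omission in the list of simples and their envelopes: one must use \cite[Corollary 4.6]{AR} decisively to guarantee that $Rw$ and the $V^f$ exhaust all simples, and Lemma \ref{lemma:seb}(2) to guarantee the $U^f$ are genuinely pairwise distinct. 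Once the summands are matched bijectively with the isomorphism classes of simples, minimality follows formally from the general theory, so the only real content lies in the injectivity and the completeness of the classification, both of which are supplied by the earlier results.
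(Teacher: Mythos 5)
Your proposal is correct and follows essentially the same route as the paper: injectivity of $C$ from Proposition \ref{prop:U} and Proposition \ref{Yinjective}, the cogenerator property from the classification of simples via \cite[Corollary 4.6]{AR} together with \cite[Proposition 18.15]{AF}, and minimality from the fact that any injective cogenerator must contain the injective envelope of every simple. The extra details you supply (pairwise non-isomorphism of the $U^f$ via the annihilators $Rf(c)$, and the optional direct Baer-criterion check) are consistent with, but not needed beyond, the paper's argument.
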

\begin{proof}
By combining Proposition \ref{prop:U} with Proposition \ref{YisenvelopeofRw} we get immediately that $C$
is injective.   As noted at the start of Section \ref{simplessection}, because there is a unique cycle in $\mathcal{T}$,  \cite[Corollary 4.6]{AR} applies, and  yields that, up to isomorphism, the set of all the simple modules over $L_K(\mathcal{T})$ consists of $Rw$ together with the (pairwise non-isomorphic) modules of the form  $\{V^f \ | \ f(x) \in \mathcal{F}\}$.  
Thus $C$ 
contains a copy of every simple left $L_K(\mathcal{T})$-module, and so it is a cogenerator for the module category \cite[Proposition~18.15]{AF}. Since any injective cogenerator has to contain of copy of the injective envelope of any simple module, we get that  $C$
 is a minimal injective cogenerator for  $L_K(\mathcal{T})$.
\end{proof}

When $S$ is any Noetherian ring, then the minimal injective cogenerator is precisely the direct sum of the injective envelopes of the simple modules.  In Theorem \ref{Yismininjcogen}  we have reached the same conclusion  for the non-Noetherian ring $L_K(\mathcal{T})$, and, moreover, have described each of these injective envelopes explicitly.

With  Theorem \ref{Yismininjcogen} in hand, we achieve a description of all the injective left $R$-modules. 
\begin{corollary}\label{allinjectives}
Let $C$ denote $Y \oplus (\oplus_{f(x) \in \mathcal{F}} U^f )$.  Then a left $L_K(\mathcal{T})$-module $M$ is injective if and only if $M$ is isomorphic to a direct summand of a direct product of copies of $C$.  
\end{corollary}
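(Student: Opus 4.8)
The plan is to establish the standard equivalence between ``injective module'' and ``direct summand of a product of copies of an injective cogenerator,'' and then invoke Theorem~\ref{Yismininjcogen}, which identifies $C$ as a minimal injective cogenerator. The forward direction is the substantive one; the reverse is formal.

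First I would prove the reverse implication, which requires almost nothing. Since $C$ is injective by Theorem~\ref{Yismininjcogen}, any direct product $\prod_\lambda C$ of copies of $C$ is again injective: products of injectives are always injective over an arbitrary ring (this is the dual of the easy direction of the Bass–Papp theorem, and holds with no Noetherian hypothesis). A direct summand of an injective module is injective, since injectivity passes to direct summands. Hence any $M$ isomorphic to a direct summand of a direct product of copies of $C$ is injective.

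For the forward implication, suppose $M$ is an injective left $R$-module. Because $C$ is a cogenerator for $R\text{-Mod}$ (this is exactly what Theorem~\ref{Yismininjcogen} asserts, via \cite[Proposition~18.15]{AF}), there is a monomorphism $M \hookrightarrow \prod_\lambda C$ into some direct product of copies of $C$; indeed, the defining property of a cogenerator is that the canonical map $M \to \prod_{\varphi \in \Hom_R(M,C)} C$ is injective. Now I would use injectivity of $M$ to split this embedding: applying the injectivity of $M$ to the identity map $M \to M$ along the monomorphism $M \hookrightarrow \prod_\lambda C$ produces a retraction $\prod_\lambda C \to M$, exhibiting $M$ as a direct summand of $\prod_\lambda C$. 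This completes the forward direction.

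I do not expect any serious obstacle here; the statement is a routine consequence of the cogenerator property once Theorem~\ref{Yismininjcogen} is in hand. The only point requiring a moment's care is that the product $\prod_\lambda C$ must be taken as a genuine direct product (not a direct sum), since over the non-Noetherian ring $R$ an infinite direct sum of injectives need not be injective—a subtlety the paper has already flagged in the discussion preceding Proposition~\ref{prop:U}. Thus I would be careful throughout to write ``direct product'' and to invoke the product-of-injectives-is-injective fact rather than the sum version.
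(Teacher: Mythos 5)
Your argument is correct and is exactly the paper's proof, just written out in more detail: the reverse direction uses that products and summands of injectives are injective, and the forward direction uses the cogenerator property of $C$ from Theorem~\ref{Yismininjcogen} to embed $M$ in a product of copies of $C$ and then splits that embedding by injectivity of $M$. No differences worth noting.
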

\begin{proof}
This follows immediately from the definition of a cogenerator, together with the facts that  direct products and direct summands of injective modules are injective, and an injective submodule of a module is necessarily a direct summand. 
\end{proof}

%
%
%
%
%
%
%
%
%
%
%
%
%
%
%
%
%

\bigskip

\begin{center}
 \textsc{Acknowledgements}
\end{center}

Part of this work was carried out during a visit of the first author to
the University of Padova, Department of Statistical Sciences.
 The first author is pleased to take this opportunity to express his
thanks to the host institution, and its faculty, for its warm hospitality
and travel support by the project of excellence ``Statistical methods and models for complex data''
(Department of Statistical Sciences)  and the grant ``Iniziative di cooperazione
universitaria Anno 2019'' (University of Padova).

\end{document}